\numberwithin{equation}{section}
\newtheorem{theorem}{Theorem}[section]
\newtheorem{lemma}[theorem]{Lemma}
\newtheorem{proposition}[theorem]{Proposition}
\newtheorem{example}[theorem]{Example}
\newtheorem{remark}[theorem]{Remark}
\newtheorem{Lemma}[theorem]{Lemma}
\newtheorem{Corollary}[theorem]{Corollary}
\newtheorem{Definition}[theorem]{Definition}
\newtheorem{Example}[theorem]{Example}
\newtheorem{Theorem}[theorem]{Theorem}
\newtheorem{Remark}[theorem]{Remark}
\newcommand{\be}{\begin{equation}}
\newcommand{\ee}{\end{equation}}
\newcommand{\beq}{\begin{equation*}}
\newcommand{\eeq}{\end{equation*}}
\newcommand{\enq}{\end{equation}}
\newcommand{\ben}{\begin{eqnarray}}
\newcommand{\een}{\end{eqnarray}}
\newcommand{\bea}{\begin{eqnarray*}}
\newcommand{\eea}{\end{eqnarray*}}
\newcommand{\At}{ {\widetilde{A}}}
\newcommand{\Bt}{{\widetilde{B}}}
\newcommand{\Mt}{ {\widetilde{M}}}
 \newcommand{\Gammat}{{\widetilde{\Gamma}}}
 \newcommand{\og}{\overline{g}}
 \newcommand{\ophi}{\overline{\phi}}
 \newcommand{\cK}{{\mathcal K}}
 \newcommand{\cH}{{\mathcal H}}
\newcommand{\K}{ {\mathcal{K}}}
\newcommand{\M}{ {\mathcal{M}}}
\newcommand{\cL}{ {\mathcal{L}}}
\newcommand{\cF}{ {\mathcal{F}}}
\newcommand{\Sc}{ {\mathcal{S}}}
\newcommand{\St}{ {\widetilde{S}}}
\newcommand{\Sct}{ {\widetilde{\mathcal{S}}}}
\newcommand{\Sco}{ {\overline{\mathcal{S}}}}
\newcommand{\Scto}{ {\overline{\widetilde{\mathcal{S}}}}}
\newcommand{\Tc}{ {\mathcal{T}}}
\newcommand{\sign}{\mbox{\rm sign}}
 \newcommand{\ind}{{\mathrm{def\,}}}
 \newcommand{\Dom}{{\mathrm{Dom\,}}}
 \newcommand{\Ran}{{\mathrm{Ran\,}}}
 \newcommand{\Span}{{\mathrm{Span\,}}}
\newcommand{\Rr}{{\mathbb{R}}}
\newcommand{\Cc}{{\mathbb{C}}}
\newcommand{\llangle}{\left\langle}
\newcommand{\rrangle}{\right\rangle}
\renewcommand{\ll}{\left\langle}
\newcommand{\rr}{\right\rangle}
\newcommand{\eps}{\varepsilon}
 \newcommand{\C}{\mathbb C}
 \newcommand{\R}{\mathbb R}
\newcommand{\essran}{\mbox{\rm essran}}
\begin{document}
\baselineskip=17pt

\title[Friedrichs model]{The detectable subspace for the Friedrichs model
\thanks{M.~Marletta and S.N.~Naboko gratefully acknowledge the support of the Leverhulme
Trust, grant RPG167, and of the Wales Institute of Mathematical
and Computational Sciences. S.N.~Naboko also was partially supported by the grant  NCN t 2013/09/BST1/04319 (Poland),  the Russian Science Foundation (project no.~15-11-30007), as well as the EC Marie Curie grant PIIF-GA-2011-299919.
}}

\author{B.~M.~Brown 
}

\author{M.~Marletta
}
   
	\author{S.~Naboko
}
	
 \author{I.~G.~Wood
}


\begin{abstract}
This paper discusses how much information on a Friedrichs model operator can be detected from `measurements on the boundary'. We use the framework of boundary triples to introduce the generalised Titchmarsh-Weyl $M$-function and the detectable subspaces which are associated with the part of the operator which is `accessible from boundary measurements'. 
The Friedrichs model, a finite rank perturbation of the operator of multiplication by the independent variable, is a toy model that is used frequently in the study of perturbation problems.
We view the  Friedrichs model as a key example for the development of the theory of detectable subspaces, because it is sufficiently simple to allow a precise description of the structure of the detectable subspace in many cases, while still exhibiting a variety of behaviours. 
The results also demonstrate an interesting interplay between modern complex analysis, such as the theory of  Hankel operators, and operator theory.
\end{abstract}

\maketitle

\section{Introduction}\label{section:0}

In this paper, we determine detectable subspaces \cite{BHMNW09,BMNW16,BMNW17} - associated with the part of the operator which is `accessible from boundary measurements' - for the so-called Friedrichs model. The Friedrichs model is a toy model, first introduced in  \cite{Fri48}, and used frequently in the study of perturbation problems (see e.g.~\cite{Pav95}). The particular form of the Friedrichs model we study here is a finite rank perturbation of the operator of multiplication by the independent variable acting on $L^2(\Rr)$ and is given by the expression
\be (Af)(x) = x f(x) + \langle f,\phi\rangle \psi(x), \label{eq:Friedrichs} \ee
where $\phi$, $\psi$ are in $L^2(\Rr)$. 
The simplicity of the model will allow for rigorous calculation of the detectable subspace for certain choices of the functions $\phi,\psi$. Even for this simple model, we will see that the detectable subspace exhibits a wide variety of properties and its determination is related to the theory of Hankel operators. Moreover, the analysis will require detailed results in complex analysis and serves to underline the interplay of this area with operator theory. 
We consider the  Friedrichs model as a key example for the development of the theory of detectable subspaces, because it allows a precise description of the structure of the the detectable subspace in many cases, while exhibiting such a variety of behaviours that one can hardly expect to obtain a description of the space in all cases in unique terms. Detectable subspaces for the Friedrichs model were already studied in \cite{BMNW17} under very specific conditions, such as disjointness of the supports of $\phi$ and $\psi$. Here, we consider more general cases, providing for a richer theory and more diverse behaviour.

The abstract setting we employ is that of adjoint pairs of operators and boundary triples. 
Adjoint pairs of operators arise naturally in many contexts in mathematics, in particular for differential operators. In the abstract setting of boundary triples \cite{BMNW08,Lyantze,MM99,MM02} it is possible to introduce the Titchmarsh-Weyl functions associated with an adjoint pair of operators. These represent, in an appropriate sense, boundary measurements of the underlying system.
The detectable subspace   sets limits on the spaces in which the operators can be reconstructed, to some extent, from the information about boundary measurements contained in the Titchmarsh-Weyl functions.
 For instance, Derkach and Malamud \cite{DM91}  show that in the formally symmetric case, if the detectable subspace is the whole Hilbert space, then the operator can be reconstructed up to unitary equivalence. 
 {In terms of the $Q$-function, this result was proved earlier by Kre\u{\i}n, Langer and Textorius \cite{KL73, LT77}.
If the underlying operator is not symmetric, but the detectable subspace is the whole Hilbert space, then the Titchmarsh-Weyl function determines the operators of an adjoint pair up to weak equivalence \cite{MM99}. However, weak equivalence does not preserve the spectral properties of the operators. 
In an abstract setting this result is optimal: further information depends on 
having a priori knowledge of the operator. It is therefore instructive to look at particular examples
to see what information may be determined from the Titchmarsh-Weyl functions.  In earlier articles, the authors have considered this question for certain types of matrix-differential operators \cite{BMNW16} and looked at very simple cases of the so-called Friedrichs model \cite{BMNW17}. Improving the result on weak equivalence in some special cases is the topic of \cite{AHS05, AK10, AN02, HMM13}.

The paper is arranged as follows. Section \ref{section:1}  introduces adjoint pairs of operators, the associated Titchmarsh-Weyl functions and the detectable subspaces in the general setting of boundary triples.
In Section \ref{section:2}, we consider the specific example of the Friedrichs model and determine an appropriate boundary triple and the associated Titchmarsh-Weyl function. Section \ref{section:6}  considers the reconstruction of the $M$-function from one resolvent restricted to the detectable subspace, while Section \ref{section:7} deals with determining the detectable subspace for various combinations of the parameters of the Friedrichs model.

\section{Preliminaries: the detectable subspace}\label{section:1}
This section introduces concepts and notation that will be used throughout the article, as well as some results from previous papers which are needed later to develop the theory. 
We make the following assumptions.
\begin{enumerate}
  \item $A$, $\At$ are closed, densely defined operators in a Hilbert space $H$.
  \item $A$ and $\At$ are an adjoint pair, i.e. $A^*\supseteq\At$ and $\At^*\supseteq A$.
\end{enumerate}
Then (see \cite{Lyantze}) 
there exist ``boundary spaces'' $\cH$, $\cK$ and ``trace operators''
  \[ \Gamma_1:\Dom(\At^*)\to\cH,\quad \Gamma_2:\Dom(\At^*)\to\cK,\]
  \[ \Gammat_1:\Dom(A^*)\to\cK\quad \hbox{ and }\quad  \Gammat_2:\Dom(A^*)\to\cH \]
  such that for $u\in \Dom(\At^*) $ and $v\in \Dom(A^*)$ we have an abstract Green formula
        \begin{equation}\label{Green}
          \llangle \At^* u, v\rrangle_H -  {\Big\langle u,A^*v \Big\rangle_H} = \llangle\Gamma_1 u, \Gammat_2 v\rrangle_\cH - \llangle \Gamma_2 u, \Gammat_1v\rrangle_\cK.
        \end{equation} 
The trace operators $\Gamma_1$, $\Gamma_2$, $\Gammat_1$ and $  \Gammat_2 $ are bounded with respect to the graph norm. The pair $(\Gamma_1,\Gamma_2)$ is 
surjective onto $\cH\times\cK$ and $(\Gammat_1,\Gammat_2)$ is surjective onto $\cK\times\cH$. Moreover, we have 
\begin{equation}\label{domains}
	\Dom(A)= \Dom(\At^*)\cap\ker\Gamma_1\cap \ker\Gamma_2 \quad \hbox{ and } \quad \Dom(\At)= \Dom(A^*)\cap\ker\Gammat_1\cap \ker\Gammat_2.
\end{equation}
The collection $\{\cH\oplus\cK, (\Gamma_1,\Gamma_2), (\Gammat_1,\Gammat_2)\}$ is called a  {boundary triple} for the adjoint pair $A,\At$.

We next define Weyl $M$-functions associated with boundary 
triples (see e.g.~\cite{BMNW08,MM99,MM02}). 
Given bounded linear operators $B\in\cL(\cK,\cH)$ and $\Bt\in\cL(\cH,\cK)$, consider extensions
of $A$ and $\At$ (respectively) given by
\[ A_B:=\At^*\vert_{\ker(\Gamma_1-B\Gamma_2)} \hbox{ and } \At_\Bt:=A^*\vert_{\ker(\Gammat_1-\Bt\Gammat_2)}.\]  
In the following, we assume the resolvent set $\rho(A_B)\neq\emptyset$, in particular $A_B$ is a closed operator.
For $\lambda\in\rho(A_B)$, define the $M$-function via
\[ M_B(\lambda):\Ran(\Gamma_1-B\Gamma_2)\to\cK,\ M_B(\lambda)(\Gamma_1-B\Gamma_2) u=\Gamma_2 u \hbox{ for all } u\in \ker(\At^*-\lambda)\]
and for $\lambda\in\rho(\At_\Bt)$, we define 
\[ \Mt_\Bt(\lambda):\Ran(\Gammat_1-\Bt\Gammat_2)\to\cH,\ \Mt_\Bt(\lambda)(\Gammat_1-\Bt\Gammat_2) v=\Gammat_2 v 
\hbox{ for all } v\in \ker(A^*-\lambda).\]

 For $\lambda\in\rho(A_B)$, the linear operator $S_{\lambda,B}:\Ran(\Gamma_1-B\Gamma_2)\to 
\ker(\At^*-\lambda)$ given by
\begin{eqnarray}\label{slamdef}
  (\At^*-\lambda)S_{\lambda,B} f=0,\ (\Gamma_1-B\Gamma_2)S_{\lambda,B} f=f,
\end{eqnarray}
is called the solution operator.
For $\lambda\in\rho(\At_B^*)$, we similarly define the linear operator $\St_{\lambda,B^*}:\Ran(\Gammat_1-B^*\Gammat_2)\to 
\ker(A^*-\lambda)$ by
\begin{eqnarray}
  (A^*-\lambda)\St_{\lambda,B^*} f=0,\ (\Gammat_1-B^*\Gammat_2)\St_{\lambda,B^*} f=f.
\end{eqnarray}

The operators  $M_B(\lambda)$, $S_{\lambda,B}$, $\Mt_\Bt(\lambda)$ and $\St_{\lambda,B^*} $ are well defined for $\lambda\in \rho(A_B)$ and $\lambda\in \rho(\At_\Bt)$,
respectively. 

 We are now ready to define one of the main concepts of the paper, the detectable subspaces, introduced in   \cite{BHMNW09}.

Fix $\mu_0\not\in \sigma(A_B)$. Then define the spaces
\be \Sc_B := \Span_{\delta\not\in \sigma(A_B)}(A_B-\delta I)^{-1}
       \mbox{Ran}(S_{\mu_0,B})\label{eq:mm1},
       \ee 
			\be \label{eq:defT} \Tc_B := \Span_{\mu\not\in \sigma(A_B)}
        \mbox{Ran}(S_{\mu,B}), \ee
and similarly,
\be \widetilde{\Sc}_{B^*} := \mbox{\rm Span}_{\delta\not\in \sigma(\tilde{A}_{B^*})}
 (\tilde{A}_{B^*}-\delta I)^{-1}\mbox{Ran}(\tilde{S}_{\tilde{\mu},B^*})\label{eq:tmm1}, \ee
 \be\widetilde{\Tc}_{B^*} := \mbox{\rm Span}_{\mu \not\in \sigma(\tilde{A}_{B^*})} \mbox{Ran}(\tilde{S}_{\mu,B^*}). \ee

\begin{remark} 
In many cases of the Friedrichs model we will be considering,
 the spaces $\overline{\Sc_B}$ and $\overline{\Tc_B}$ coincide and are independent of $B$.
This follows from \cite[Proposition 2.9]{BMNW17}. To avoid cumbersome notation, in many places we shall denote all these spaces by $\Sco$. 
 {We will refer to $\Sco$ as the detectable subspace.}
\end{remark}

In \cite[Lemma 3.4]{BHMNW09}, it is shown that
 $\overline{\Sc}$ is a regular invariant space of the resolvent of the operator $A_B$: that is, 
$\overline{(A_B-\mu I)^{-1}\overline{\Sc}} = \overline{\Sc}$ for all $\mu\in \rho(A_B)$.

From  {\eqref{eq:mm1}} and \cite[Proposition 3.9]{BMNW08}, we get 
\be\label{eq:Skernel}\Sc^\perp=\bigcap_{B\in\cL(\cK,\cH)}\ \bigcap_{\lambda\in\rho(A_B)}\ker(S_{\lambda,B}^*)=\bigcap_{B\in\cL(\cK,\cH)}\ \bigcap_{\lambda\in\rho(A_B)}\ker\left(\Gammat_2(\At_{B^*}-\overline{\lambda})^{-1}\right).\ee

\section{The Friedrichs model}\label{section:2}

In this section we introduce the Friedrichs model.
We consider in $L^2(\Rr)$ the operator $A$ with domain 
\be \Dom(A) = \left\{ f\in L^2(\Rr) \, \Big\vert \, xf(x)\in L^2(\Rr), \;\;\;
 \lim_{R\rightarrow\infty}\int_{-R}^{R}f(x)dx \;\; \mbox{exists and is zero}\right\}, \label{eq:1} \ee
given by the expression
\be (Af)(x) = x f(x) + \langle f,\phi\rangle \psi(x), \label{eq:2} \ee
where $\phi$, $\psi$ are in $L^2(\Rr)$. Observe that since the constant
function ${\mathbf 1}$ does not lie in $L^2(\Rr)$ the domain of $A$ is 
dense. 

We first collect some results from \cite{BHMNW09} where more details and proofs can be found:

The adjoint of $A$ is given on the domain
\be \Dom(A^*) = \left\{ f\in L^2(\Rr) \, | \, \exists c_f\in \Cc : xf(x)-c_f{\mathbf 1} 
\in L^2(\Rr)\right\}, \label{eq:3} \ee
by the formula
\be (A^*f)(x) = x f(x) - c_f{\mathbf 1} +\langle f,\psi\rangle\phi(x). \label{eq:4}\ee
Note that $\Dom(A)\subseteq \Dom(A^*)$ and that $c_f=0$ for $f\in \Dom(A)$.

We  introduce an operator $\widetilde{A}$ in which the roles of $\phi$ and
$\psi$ are exchanged: $\Dom(\At)=\Dom(A)$ and
\be (\At f)(x) = x f(x) + \langle f,\psi\rangle \phi {(x)}. \label{eq:2t} \ee
We immediately see that
$\Dom(\At^*) = \Dom(A^*)$ and that
\be  {(\At^*f)(x)} = x f(x) - c_f{\mathbf 1} +\langle f,\phi\rangle {\psi(x)}. \label{eq:4t}\ee
Thus $\At^*$ is an extension of $A$,  $A^*$ is an extension of $\At$.

 {Since $c_f=\lim_{R\to\infty}(2R)^{-1} \int_{-R}^R xf(x)\ dx$ is uniquely determined, 
we can define} trace operators $\Gamma_1$ and $\Gamma_2$ on 
$\Dom(A^*)$ as follows:
\be \Gamma_1 u = \lim_{R\to\infty} \int_{-R}^R u(x) dx,\;\;\;
 \Gamma_2 u = c_u. \label{eq:5}\ee
 {Note that $\Gamma_1 u= \int_{\Rr} (u(x) - c_u{\mathbf 1}\sign(x)(x^2+1)^{-1/2})dx$, which is the expression used in \cite{BHMNW09}.}

\begin{lemma}\label{lemma:4}
We have
\be A = \left. \At^*\right|_{\ker(\Gamma_1)\cap\ker(\Gamma_2)} \;\;\; \hbox{ and } \;\;\;
 \At = \left. A^*\right|_{\ker(\Gamma_1)\cap\ker(\Gamma_2)}; \label{eq:5t}
\ee
moreover, the following  Green's formula holds
\be \langle A^*f,g\rangle - \langle f,\At^*g \rangle
 = \Gamma_1 f \overline{\Gamma_2 g} - \Gamma_2 f \overline{\Gamma_1 g}.
 \label{eq:6t}
\ee
\end{lemma}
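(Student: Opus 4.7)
The lemma has two parts: identifying $A$ and $\At$ as restrictions of $\At^*$ and $A^*$ respectively, and deriving the abstract Green's formula. Both follow by direct computation from the explicit formulas \eqref{eq:1}--\eqref{eq:4t} for the operators and their domains together with the trace maps defined in \eqref{eq:5}.

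For the first identity in \eqref{eq:5t} my plan is to prove the set equality $\Dom(A) = \Dom(\At^*)\cap\ker\Gamma_1\cap\ker\Gamma_2$ by observing that the condition $xf\in L^2(\Rr)$ appearing in \eqref{eq:1} is equivalent to $xf - c_f\mathbf{1}\in L^2(\Rr)$ with $c_f=0$, i.e., to $f\in\Dom(\At^*)$ together with $\Gamma_2 f=0$; the integral condition in \eqref{eq:1} is precisely $\Gamma_1 f=0$. On this common domain the formulas \eqref{eq:2} and \eqref{eq:4t} agree, since the $-c_f\mathbf{1}$ term vanishes. The second identity then follows by the formal symmetry $\phi\leftrightarrow\psi$, which swaps $A\leftrightarrow\At$ and $A^*\leftrightarrow\At^*$ while leaving the boundary conditions invariant.

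For Green's formula \eqref{eq:6t} I would take $f,g\in\Dom(A^*)=\Dom(\At^*)$ and compute $\langle A^*f,g\rangle - \langle f,\At^*g\rangle$ directly from \eqref{eq:4} and \eqref{eq:4t}. The rank-one contributions $\langle f,\psi\rangle\langle\phi,g\rangle$ from the first inner product and $\langle\phi,g\rangle\langle f,\psi\rangle$ from the second cancel exactly. The remaining integrand is
\[
\bigl(xf(x)-c_f\bigr)\overline{g(x)} - f(x)\bigl(x\overline{g(x)}-\overline{c_g}\bigr),
\]
which lies in $L^1(\Rr)$ because each of $xf-c_f\mathbf{1}$ and $xg-c_g\mathbf{1}$ is in $L^2(\Rr)$ by definition of $\Dom(A^*)$. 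The pointwise cancellation of the $xf\overline{g}$ terms reduces this integrand to $-c_f\overline{g(x)} + \overline{c_g}f(x)$. Evaluating the integral as the principal value $\lim_{R\to\infty}\int_{-R}^R$ and then invoking the definition \eqref{eq:5} yields $-\Gamma_2 f\,\overline{\Gamma_1 g} + \Gamma_1 f\,\overline{\Gamma_2 g}$, which is exactly the right-hand side of \eqref{eq:6t}.

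The only real subtlety is that neither $\int c_f\overline{g(x)}\,dx$ nor $\int\overline{c_g}f(x)\,dx$ need converge as Lebesgue integrals, since elements of $\Dom(A^*)$ are in $L^2(\Rr)$ but not in general in $L^1(\Rr)$. This is why one must keep the combined integrand $(xf-c_f)\overline{g}-f(x\overline{g}-\overline{c_g})$ as a single $L^1$ object, compute its integral as a symmetric principal value, and split it into the two $\Gamma_1$-limits only at the final step, where convergence of each piece is guaranteed by the hypothesis $f,g\in\Dom(A^*)$.
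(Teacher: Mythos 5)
Your proof is correct. Note that the paper itself gives no proof of this lemma: it is quoted from \cite{BHMNW09}, where the argument is exactly the direct computation you carry out. Your verification of \eqref{eq:5t} (identifying $\ker\Gamma_2$ with the condition $c_f=0$, i.e.\ $xf\in L^2$, and $\ker\Gamma_1$ with the vanishing of the symmetric integral) and of \eqref{eq:6t} (cancellation of the rank-one terms via $\overline{\langle g,\phi\rangle}=\langle\phi,g\rangle$, followed by the reduction of the integrand to $-c_f\overline{g}+\overline{c_g}f$) is sound, and you correctly identify and resolve the one genuine subtlety, namely that the two pieces of the reduced integrand are only conditionally integrable and must be split only after the combined $L^1$ integral has been expressed as a symmetric principal value, at which point the separate existence of the $\Gamma_1$-limits on $\Dom(A^*)$ justifies the splitting.
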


We finish our review from \cite{BHMNW09} with the  $M$-function and the resolvent:

\begin{lemma}\label{lem:HL} Suppose that $\Im\lambda\neq 0$. Then $f\in\ker(\At^*-\lambda)$ if
\be f {(x)} = (\Gamma_2f) \left[ \frac{1}{x-\lambda}-\frac{\langle (t-\lambda)^{-1},\phi\rangle}{D(\lambda)}\frac{\psi {(x)}}{x-\lambda}\right] .\label{eq:7}\ee
Here $D$ is the function
\be D(\lambda) = 1 + \int_{\mathbb R}\frac{1}{x-\lambda}\psi(x)\overline{\phi(x)}dx. \label{eq:Ddef} \ee
Moreover the Titchmarsh-Weyl coefficient $M_B(\lambda)$ is given by
\be M_B(\lambda) = \left[ \mbox{sign}(\Im\lambda) \pi i -\frac{ {\langle (t-\lambda)^{-1},\overline{\psi}\rangle
 \langle (t-\lambda)^{-1},\phi\rangle} }{D(\lambda)} - B\right]^{-1}. \label{eq:9}
\ee
For the resolvent, we have that $(A_B-\lambda)f=g$ if and only if
\be f(x) = \frac{g(x)}{x-\lambda}
                                 -\frac{1}{ D(\lambda)}\frac{\psi(x)}{x-\lambda}
 \left\langle \frac{g}{t-\lambda},\phi\right\rangle +
  c_f\left[\frac{1}{x-\lambda}-\frac{1}{ D(\lambda)}\frac{\psi(x)}{x-\lambda}
 \left\langle \frac{1}{t-\lambda},\phi\right\rangle\right], \label{eq:mm1c}
\ee
in which
the coefficient $c_f$ is given by
\be c_f = M_B(\lambda)\left[-\left\langle \frac{1}{t-\lambda},\overline{g}\right\rangle
 + \frac{1}{ D(\lambda)}\left\langle \frac{g}{t-\lambda},\phi\right\rangle
 \left\langle \frac{1}{t-\lambda},\overline{\psi}\right\rangle\right]. \label{eq:mm10}\ee
\end{lemma}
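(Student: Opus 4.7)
The plan is to solve the two inhomogeneous problems $(\At^*-\lambda)f=0$ and $(A_B-\lambda)f=g$ directly, using the explicit expression \eqref{eq:4t} for $\At^*$ together with the definitions \eqref{eq:5} of the trace operators, and then to translate the answers into the defining identities for the solution operator $S_{\lambda,B}$ and the $M$-function $M_B(\lambda)$.

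For \eqref{eq:7}, I first rewrite the eigenvalue equation $\At^*f=\lambda f$ via \eqref{eq:4t} as $(x-\lambda)f(x)=c_f\mathbf{1}-\langle f,\phi\rangle\psi(x)$. Since $\Im\lambda\neq 0$, dividing by $x-\lambda$ and then taking the inner product of both sides with $\phi$ produces a scalar equation for $\langle f,\phi\rangle$ whose coefficient is precisely $1+\langle\psi/(t-\lambda),\phi\rangle=D(\lambda)$. Solving this equation and substituting back, while noting $c_f=\Gamma_2 f$, yields \eqref{eq:7} at once.

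For \eqref{eq:9}, I apply the two trace maps to the kernel function just obtained. The relation $\Gamma_2 u=c_u$ is immediate, while $\Gamma_1 u$ reduces to evaluating $\lim_{R\to\infty}\int_{-R}^{R}(x-\lambda)^{-1}\,dx$ together with the absolutely convergent integral $\int\psi(x)/(x-\lambda)\,dx=\langle(t-\lambda)^{-1},\overline{\psi}\rangle$. The principal-value integral is the only analytic point requiring care: closing the contour in the half-plane opposite to $\lambda$ (or using the explicit antiderivative and tracking branches) gives $\pi i\,\sign(\Im\lambda)$. Assembling the pieces,
\[
(\Gamma_1-B\Gamma_2)u = c_u\left[\pi i\,\sign(\Im\lambda) - \frac{\langle (t-\lambda)^{-1},\overline{\psi}\rangle\,\langle (t-\lambda)^{-1},\phi\rangle}{D(\lambda)} - B\right],
\]
and inverting this relation via the defining identity $M_B(\Gamma_1-B\Gamma_2)u=\Gamma_2 u=c_u$ yields \eqref{eq:9}.

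Finally, for the resolvent, $(A_B-\lambda)f=g$ is equivalent to $\At^*f=\lambda f+g$ together with the boundary condition $(\Gamma_1-B\Gamma_2)f=0$. Repeating the algebraic step of the second paragraph with $g$ now present leads to $\langle f,\phi\rangle D(\lambda)=c_f\langle(t-\lambda)^{-1},\phi\rangle+\langle g/(t-\lambda),\phi\rangle$, and substituting back into $(x-\lambda)f(x)=c_f+g(x)-\langle f,\phi\rangle\psi(x)$ produces \eqref{eq:mm1c}. To pin down $c_f$ I impose $\Gamma_1 f=B c_f$ on \eqref{eq:mm1c}: applying $\Gamma_1$ termwise and reusing the principal-value identity, the resulting linear equation in $c_f$ has coefficient exactly $-M_B(\lambda)^{-1}$ by \eqref{eq:9}, and solving it delivers \eqref{eq:mm10}. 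The main — in fact only — obstacle is the careful treatment of the conditionally convergent boundary integral $\lim_R\int_{-R}^{R}(x-\lambda)^{-1}dx$, since $(x-\lambda)^{-1}\notin L^1(\R)$; every other manipulation is routine, legitimate because $\phi,\psi,g\in L^2(\R)$ and $(t-\lambda)^{-1}\in L^2(\R)$ when $\Im\lambda\ne 0$, so all the inner products appearing converge absolutely.
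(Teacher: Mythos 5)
Your proof is correct. Note that the paper itself gives no proof of this lemma; it is quoted verbatim from \cite{BHMNW09} as part of the review of the Friedrichs model, so there is nothing to compare against except the standard derivation, which is exactly what you carry out: solve $(x-\lambda)f = c_f\mathbf{1}-\langle f,\phi\rangle\psi$ (resp.\ the inhomogeneous version with $g$), close the scalar equation for $\langle f,\phi\rangle$ via the coefficient $D(\lambda)$, and evaluate $\Gamma_1$ using $\lim_{R\to\infty}\int_{-R}^R(x-\lambda)^{-1}dx=\pi i\,\sign(\Im\lambda)$. All your formulas check out (the only quibble is cosmetic: in the final step the coefficient of $c_f$ is $+M_B(\lambda)^{-1}$ with the remaining terms carried to the other side with a minus sign, which is what produces \eqref{eq:mm10}; this does not affect the result). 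You correctly identify the one genuinely non-absolutely-convergent integral and handle it properly; implicitly one also needs $D(\lambda)\neq 0$ for the formulas to be meaningful, which is a hypothesis inherited from the cited source rather than an omission on your part.
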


\begin{remark}\label{Fourier}
There is another approach to the Friedrichs model via the Fourier transform which may appear much more natural. It is easy to check that, denoting the Fourier transform by $\cF$ and $ {\cF f=\hat{f}}$, we get
$$\cF A\cF^* = i\frac{d}{dx}+\llangle\cdot,\hat\phi\rrangle \hat\psi, \quad \Dom(\cF A \cF^*)=\{ u\in H^1(\R): \ u(0)=0\},$$
$$\cF  {\At^*} \cF^* = i\frac{d}{dx}+\llangle\cdot,\hat\phi\rrangle \hat\psi, \quad \Dom(\cF  {\At^*} \cF^*)=\{ u\in L^2(\R): \ u\vert_{\R^\pm}\in H^1(\R^\pm)\},$$
and
$$\cF A_B \cF^* = i\frac{d}{dx}+\llangle\cdot,\hat\phi\rrangle \hat\psi,$$
$$ \Dom(\cF A_B \cF^*)=\left\{ u\in L^2(\R): \ u\vert_{\R^\pm}\in H^1(\R^\pm), u(0^+)=\frac{B-i\pi}{B+i\pi} u(0^-)\right\},$$
where $u(0^\pm)$ denotes the limit of $u$ at zero from the left and right, respectively.
Moreover, $\Gamma_1 f = \sqrt{\pi/2}(\hat f(0^+)+\hat f(0^-))$ and $\Gamma_2 f = i(2\pi)^{-1}(\hat f(0^+)-\hat f(0^-))$. 
There are similar expressions for the adjoint operators and traces.

{In terms of extension theory it looks much more natural to use this Fourier representation compared to the standard form of the Friedrichs model (as a perturbed multiplication operator). However, despite the equivalence of both representations, for our later calculations the original model is more suitable, as it gives a simpler formula for the resolvent than working with the differential operator, and reduces many questions to more straightforward  
residue calculations.}
\end{remark}

\section{Friedrichs model: reconstruction of $M_B(\lambda)$ from one  {restricted} resolvent $(A_B-\lambda)^{-1}|_{\overline{\mathcal S}}$}
\label{section:6}

In this section we show how to reconstruct $M_B(\lambda)$ explicitly from the restricted resolvent. The fact that even the bordered resolvent
determines $M_B(\lambda)$ uniquely was proved in the abstract setting in \cite{BMNW17}, but of course methods
of reconstruction depend on the concrete operators under discussion.

We introduce the notation $\widehat{\cdot}$ for the Cauchy or Borel transform given by
\be\label{eq:mm9} \widehat{\overline \phi}(\lambda) = \ll \frac{1}{t-\lambda},\phi\rr, \;\;\;
 \widehat{\psi}(\lambda) = \ll \frac{1}{t-\lambda},\overline{\psi}\rr 
 \ee
 and   $P_\pm:L^2(\R)\to H_2^\pm(\R)$ for the Riesz projections given by 
 \be\label{Riesz}P_\pm f(k)=\pm\frac{1}{2\pi i}\lim_{\eps\to 0} \widehat{ f}(k \pm i\eps) =\pm\frac{1}{2\pi i}\lim_{\eps\to 0}\int_\R\frac{f(x)}{x-(k\pm i\eps)}dx,\ee
where the limit is to be understood in $L^2(\R)$ (see \cite{Koosis}).  {Here, $H^+_p(\R)$ and $H^-_p(\R)$ denote the Hardy spaces of boundary values of $p$-integrable functions in the upper and lower complex half-plane, respectively.}
To simplify notation, we also sometimes write $(\hat f)_\pm(k) =\widehat f (k\pm i0):=2\pi i P_\pm f(k)$.

\begin{Theorem}
\label{thm:1}
For the Friedrichs model, assume that 
$(A_B-\lambda)^{-1}|_{\overline{\mathcal S}}$ is known  {for  all
$\lambda\in\rho(A_B)\setminus\Rr$.} Then $M_B(\lambda)$ can be recovered.
\end{Theorem}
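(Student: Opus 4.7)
The strategy is to reduce the recovery of $M_B(\lambda)$ to the $\gamma$-field identity of boundary-triple theory. This in turn requires exhibiting any nonzero element of the one-dimensional subspace $\Ran(S_{\mu_0,B})\subset\overline{\mathcal{S}}$ for some fixed $\mu_0\in\rho(A_B)\setminus\Rr$, and then evaluating a boundary functional on it.

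First I would show that $\Gamma_2$ can be read off the restricted-resolvent data. For $g\in\overline{\mathcal{S}}$, the function $h:=(A_B-\lambda)^{-1}g$ is known as an element of $L^2(\Rr)$; since $h\in\Dom(A_B)\subset\Dom(A^*)$, its trace $\Gamma_2 h=c_h$ is by \eqref{eq:3}--\eqref{eq:5} the unique constant with $xh(x)-c_h\mathbf{1}\in L^2(\Rr)$, and is therefore extracted directly from the known function $h$. Rearranging the identity $\At^*h-\lambda h=g$ via \eqref{eq:4t} moreover gives
$$
(x-\lambda)h(x)-g(x)=c_h-\langle h,\phi\rangle\psi(x),
$$
so the $L^2$-component of the left-hand side is a scalar multiple of $\psi$; varying $(\lambda,g)$ delivers (generically, under $\phi\notin\overline{\mathcal{S}}^{\perp}$, controlled by \eqref{eq:Skernel}) a non-trivial multiple $\widetilde\psi$ of $\psi$ as an explicit $L^2$-function.

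Next I would apply the $\gamma$-field relation
$$
S_{\lambda,B}\mathbf{1}=S_{\mu_0,B}\mathbf{1}+(\lambda-\mu_0)(A_B-\lambda)^{-1}S_{\mu_0,B}\mathbf{1},
$$
which follows from the general decomposition $v=(A_B-\lambda)^{-1}(\At^*-\lambda)v+S_{\lambda,B}(\Gamma_1-B\Gamma_2)v$ applied to $v=S_{\mu_0,B}\mathbf{1}$, combined with $(\At^*-\lambda)S_{\mu_0,B}\mathbf{1}=(\mu_0-\lambda)S_{\mu_0,B}\mathbf{1}$ and $(\Gamma_1-B\Gamma_2)S_{\mu_0,B}\mathbf{1}=\mathbf{1}$. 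Applying $\Gamma_2$, using $\Gamma_2 S_{\mu,B}\mathbf{1}=M_B(\mu)$, and rescaling, one obtains for \emph{any} nonzero $g\in\Ran(S_{\mu_0,B})\subset\overline{\mathcal{S}}$ the master formula
$$
M_B(\lambda)=\frac{\Gamma_2 g+(\lambda-\mu_0)\,\Gamma_2\bigl[(A_B-\lambda)^{-1}g\bigr]}{(\Gamma_1-B\Gamma_2)g},
$$
whose right-hand side is entirely computable from the data: Step~1 gives the outer $\Gamma_2$, and \eqref{eq:5} gives $\Gamma_1,\Gamma_2$ on the known $L^2$-function $g$.

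It thus remains to identify the subspace $\Ran(S_{\mu_0,B})\subset\overline{\mathcal{S}}$. By Lemma~\ref{lem:HL} its elements are of the form $\tfrac{1}{x-\mu_0}\bigl(1-c\,\psi(x)\bigr)$ for the specific scalar $c=\widehat{\overline\phi}(\mu_0)/D(\mu_0)$; substituting $\widetilde\psi$ from Step~1 produces a known one-parameter family $\tfrac{1}{x-\mu_0}\bigl(1+\alpha\,\widetilde\psi(x)\bigr)$, $\alpha\in\Cc$, and the correct parameter is singled out by the requirement that the resulting function lies in the (given) subspace $\overline{\mathcal{S}}$. The main obstacle is to show that this membership condition pins down $\alpha$ uniquely: generically $\psi/(x-\mu_0)\notin\overline{\mathcal{S}}$ and uniqueness is immediate, but in degenerate cases --- governed by Hankel-type properties of the Friedrichs model indicated in Remark~\ref{Fourier} --- one must either vary $\mu_0$ within $\rho(A_B)\setminus\Rr$ to reach a generic value, or introduce a duality argument based on $\langle h,\phi\rangle=\langle g,(\At_{B^*}-\bar\lambda)^{-1}\phi\rangle$ together with \eqref{eq:Skernel} to select the correct candidate.
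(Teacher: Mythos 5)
Your strategy --- recover $\psi$ from the $L^2$-part of $(x-\lambda)h-g$, identify the one-dimensional space $\Ran(S_{\mu_0,B})$ inside $\Sco$, and read off $M_B$ from the identity $S_{\lambda,B}=\bigl(I+(\lambda-\mu_0)(A_B-\lambda)^{-1}\bigr)S_{\mu_0,B}$ --- is genuinely different from the paper's, and the ``master formula'' itself is a correct consequence of the boundary-triple axioms. But it has two gaps that the paper's own proof is specifically designed to avoid. First, your formula needs the value of $B$ in the denominator $(\Gamma_1-B\Gamma_2)g$, and $B$ is not part of the data: only the restricted resolvent is given. The paper devotes its entire Step~2 to recovering $B$, from the asymptotics of $c_f$ with $\lambda=-\mu$, $\Im\mu\to\infty$; without some such argument your right-hand side is not computable from the hypotheses. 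Second, and more seriously, your claim that ``the correct parameter $\alpha$ is singled out by the requirement that $\frac{1}{x-\mu_0}(1+\alpha\widetilde\psi)$ lies in $\Sco$'' is exactly the alternative approach the paper discusses \emph{after} its proof, where it introduces the non-uniqueness set $\Omega=\{\mu:\ \frac{1}{x-\mu}\in\Sco\ \hbox{and}\ \frac{\psi(x)}{x-\mu}\in\Sco\}$. For $\mu_0\in\Omega$ every $\alpha$ yields an element of $\Sco$, so membership pins down nothing; and the paper shows that when $\Cc_{+}\setminus\Omega_{+}$ has measure zero one only learns $\Sco\supseteq H_2^{-}$, after which recovery of $\widehat{\overline{\phi}}/D$ in $\Cc_{+}$ requires an \emph{extra} hypothesis (its Corollary assumes $\widehat{\overline{\phi}}/D$ continues analytically across $\Rr$). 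So ``vary $\mu_0$ to reach a generic value'' can fail on an entire half-plane, and the ``duality argument'' is not developed enough to assess. The paper's Step~3 circumvents this entirely: it takes \emph{any} $g_\mu=\frac{1}{x-\mu}-\eta(\mu)\frac{\psi(x)}{x-\mu}\in\Sco$ with $\eta(\mu)=O(|\Im\mu|^{-1/2})$, never claiming uniqueness of $\eta$, and extracts $\widehat{\overline{\phi}}(\lambda)/D(\lambda)$ from the limit $\Im\mu\to\infty$ of a known quantity, followed by the algebra in \eqref{eq:mm15}--\eqref{eq:mm16}.

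A smaller but still real issue: your recovery of $\psi$ needs some $g\in\Sco$ and $\lambda$ with $\langle (A_B-\lambda)^{-1}g,\phi\rangle\neq0$, i.e.\ $\phi\notin\Sc^{\perp}$. You label this ``generic'' and point to \eqref{eq:Skernel}, but it must be \emph{proved} for every non-trivial model: the paper's Step~1 is precisely such a proof, a contradiction argument running through \eqref{eq:mm3}--\eqref{eq:mm8} which shows that degeneracy forces $\widehat{\overline{\phi}}\equiv0$ and hence $\phi\equiv0$. (A shortcut is available --- computing $\langle S_{\mu,B}1,\phi\rangle=\widehat{\overline{\phi}}(\mu)/D(\mu)$ directly from \eqref{eq:7} --- but some version of this argument has to appear for your Step~1 to be unconditional.)
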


\begin{Remark}
We assume that $(A_B-\lambda)^{-1}|_{\overline{\mathcal S}}$ is known  {for all
$\lambda\in \rho(A_B)\setminus\Rr$}, though it is certainly sufficient to know it at one 
point in each connected component of $\Cc\setminus\sigma(A_B)$. If $\sigma(A_B)$
does not cover all of either half-plane $\Cc_{\pm}$ then it is enough to know
$(A_B-\lambda)^{-1}|_{\overline{\mathcal S}}$ at two points, one in each of 
$\Cc_{\pm}$. If, additionally, $\sigma(A_B)$ does not cover $\Rr$, then it suffices
to know $(A_B-\lambda)^{-1}|_{\overline{\mathcal S}}$ for just one value of
$\lambda$.
\end{Remark}
\begin{proof} 

{\bf 1. Recovering the function $\psi$.} Take non-zero $g\in\Sco$ and $\lambda
\in \Cc\setminus(\Rr\cup\sigma(A_B))$. Observe that (\ref{eq:mm1c}) may be rewritten in the form
\begin{equation}\label{eq:mm2}
f(x) - \frac{g(x)}{x-\lambda} - \frac{c_f}{x-\lambda} = \frac{\psi(x)}{x-\lambda}A(\lambda),
\end{equation}
in which 
\[ A(\lambda) = -\frac{1}{ D(\lambda)}\left[\left\langle \frac{g}{t-\lambda},\phi\right\rangle
 + c_f \left\langle\frac{1}{t-\lambda},\phi\right\rangle\right] \]
 and $D(\lambda)$  is given by   \eqref{eq:Ddef}.
 The left hand side of (\ref{eq:mm2}) is known as a function of $\lambda$, at least for 
 $g\in\overline{\mathcal{S}}$. To determine $\psi$ {\bf up to a scalar multiple} it is therefore sufficient to
 find $g$ and $\lambda$ so that  $A(\lambda)$ is non-zero: 
 in other words, find $g$ such that the function $A(\cdot)$ is not identically zero.
 
 We proceed by contradiction. Suppose we have a non-trivial Friedrichs model (i.e.
 neither $\phi$ nor $\psi$ is identically zero). If $A(\cdot)$ is identically zero then multiplying by $M_B(\lambda)^{-1}$  {from \eqref{eq:9} and using \eqref{eq:mm10}}
 we obtain
 \begin{align}
\left[i\pi\mbox{sign}(\Im\lambda)-\frac{1}{ D(\lambda)}
 \left\langle\frac{1}{t-\lambda},\phi\right\rangle\left\langle\frac{1}{t-\lambda},
 \overline{\psi}\right\rangle-B\right]\left\langle\frac{g}{t-\lambda},\phi\right\rangle
\hspace{1cm} \nonumber \\ +\left[-\left\langle\frac{1}{t-\lambda},\overline{g}\right\rangle+\frac{1}{ D
 (\lambda)}\left\langle\frac{g}{t-\lambda},\phi\right\rangle\left\langle\frac{1}{t-\lambda},
 \overline{\psi}\right\rangle\right]\left\langle\frac{1}{t-\lambda},\phi\right\rangle\equiv 0,
 \end{align}
from which it follows
\begin{equation}\label{eq:mm3}
(i\pi\mbox{sign}(\Im\lambda)-B)\left\langle \frac{g}{t-\lambda},\phi\right\rangle
 - \left\langle \frac{1}{t-\lambda},\overline{g}\right\rangle \left\langle \frac{1}{t-\lambda},\phi
 \right\rangle \equiv 0.
 \end{equation}
 For all non-real
$\mu$ such that $ D(\mu)$ is nonzero (this is true for a.e.~non-real $\mu$ by analyticity),
 there exists $g\in\Sco$ in the range of the solution operator $S_{\mu,B}$. We know  {from \eqref{eq:7}} that such $g$ have the
form
\begin{equation}\label{eq:mm4}
g(x) = \frac{1}{x-\mu}-\frac{1}{ D(\mu)}\left\langle\frac{1}{t-\mu},
\phi\right\rangle\frac{\psi(x)}{x-\mu}, 
\end{equation}
though we do not know the function $\psi$ or the value of 
$\frac{1}{ D(\mu)}\left\langle\frac{1}{t-\mu},
\phi\right\rangle$. Substituting (\ref{eq:mm4}) into (\ref{eq:mm3}) yields
\ben
&&(i\pi\mbox{sign}(\Im\lambda)-B)\left[\ll\frac{1}{(t-\mu)(t-\lambda)},\phi\rr
 - \frac{1}{ D(\mu)}\ll \frac{1}{t-\mu},\phi\rr\ll\frac{\psi}{(t-\mu)(t-\lambda)},
 \phi\rr\right] \nonumber \\
&& \equiv \ll \frac{1}{t-\lambda},\phi\rr\left[\ll\frac{1}{(t-\lambda)(t-\mu)},\mathbf{1}\rr
  - \frac{1}{  {D(\mu)}}\ll\frac{1}{t-\mu},\phi\rr\ll\frac{1}{(t-\lambda)(t-\mu)},
  \overline{\psi}\rr\right]. \label{eq:mm5}
\een
If we use the identity
\be \frac{\lambda-\mu}{(t-\lambda)(t-\mu)} = \frac{1}{t-\lambda}-\frac{1}{t-\mu} \label{eq:mm12}\ee
and use the notations from \eqref{eq:mm9}
then  {multiplying by $(\lambda-\mu)$, \eqref{eq:mm5}} 
becomes
\begin{align}
(i\pi\mbox{sign}(\Im\lambda)-B)\left[\widehat{\overline{\phi}}(\lambda)-
\widehat{\overline{\phi}}(\mu) - \frac{1}{ D(\mu)}\widehat{\overline{\phi}}(\mu)
( D(\lambda)- D(\mu))\right] \nonumber \\
\equiv \widehat{\overline{\phi}}(\lambda)\left[\int_{\Rr}\frac{\lambda-\mu}{(t-\lambda)(t-\mu)}dt
- \frac{\widehat{\overline{\phi}}(\mu) }{ D(\mu)}(\widehat{{\psi}}(\lambda)-\widehat{{\psi}}(\mu))
\right].\label{eq:mm6}
\end{align}
Performing the integral for the case in which $\Im\lambda\cdot\Im\mu<0$, we obtain
\be
(i\pi\mbox{sign}(\Im\lambda)-B)\left[\widehat{\overline{\phi}}(\lambda)
- \frac{ D(\lambda)}{ D(\mu)}\widehat{\overline{\phi}}(\mu)\right] 
\equiv \widehat{\overline{\phi}}(\lambda)\left[\pm 2\pi i
- \frac{\widehat{\overline{\phi}}(\mu) }{ D(\mu)}(\widehat{{\psi}}(\lambda)-\widehat{{\psi}}(\mu))
\right].
\ee
Fix $\lambda$ and let $\mu\rightarrow i\infty$, so that $ D(\mu)\rightarrow 1$
and $\widehat{\overline\phi}(\mu)\rightarrow 0$. This yields
\be (i\pi\mbox{sign}(\Im\lambda)-B)\widehat{\overline{\phi}}(\lambda) 
\equiv \pm2\pi i \widehat{\overline{\phi}}(\lambda). \label{eq:mm7} \ee
If, on the other hand, we consider $\Im\lambda\cdot\Im\mu>0$ in (\ref{eq:mm6}) then the value
of the integral is zero, and we obtain, upon letting $\mu\rightarrow i\infty$,
\be (i\pi\mbox{sign}(\Im\lambda)-B)\widehat{\overline\phi}(\lambda)\equiv 0. \label{eq:mm8} \ee
Equations (\ref{eq:mm7},\ref{eq:mm8}) together imply that $\widehat{\overline{\phi}}$ is identically
zero, and hence so is $\phi$. In this case the function $\psi$ is irrelevant and so our Friedrichs
model is trivial, a contradiction.
Thus (\ref{eq:mm2}) determines $\psi$ up to a constant multiple. We may choose this (non-zero)
multiple arbitrarily, since $\phi$ can be rescaled if necessary to obtain the correct Friedrichs model.

{\bf 2. Recovering the boundary condition parameter $B$.} Returning to the parameter
$c_f$ in (\ref{eq:mm10}) and using the notation (\ref{eq:mm9}), we have
\bea  && \left[i\pi \mbox{sign}(\Im\lambda)-B-\frac{1}{D(\lambda)}\widehat{\overline{\phi}}(\lambda)
\widehat{\psi}(\lambda)\right]c_f  \\
&&= \left[-\ll \frac{1}{t-\lambda},\overline{g}\rr  {+} \frac{1}{D(\lambda)}
\ll \frac{g}{t-\lambda},\phi\rr\ll
\frac{1}{t-\lambda},\overline{\psi}\rr\right] \\
&&= \left[-\ll \frac{1}{t-\lambda},\overline{g}\rr + O\left(\| g \|_2 |\Im\lambda|^{-3/2}\right)\right], 
\eea
as $\Im\lambda\rightarrow\infty$, and uniformly in $g$. Now choose an element 
\be g(x)\equiv g_\mu(x) := \frac{1}{x-\mu}-\eta(\mu)\frac{\psi(x)}{x-\mu}, \label{eq:mm11}\ee
$\mu\in\Cc\setminus\Rr$, $D(\mu)\neq 0$, with some $\eta(\mu) = O(|\Im\mu|^{-1/2})$.
We know that such $\eta(\mu)$ exists, and indeed may be chosen as $\widehat{\overline \phi}(\mu)/
D(\mu)$, but we do not yet know $\phi$ and therefore do not claim that our particular choice of
$\eta$ is given by this formula. 
We fix some choice of $\eta$,
so that $g=g_\mu$ is determined and $c_f$ is known as a function of $\lambda$ and $\mu$. We have
\bea &&
(i\pi\mbox{sign}(\Im\lambda)-B+O(|\Im\lambda|^{-1}))c_f \\ &&= \left[-\ll\frac{1}{t-\lambda},\frac{1}{t-\overline{\mu}}\rr
  + \eta(\mu)\ll\frac{1}{t-\lambda},\frac{\overline{\psi}}{t-\overline{\mu}}\rr
   + O(|\Im\lambda|^{-3/2}) \| g_\mu \|_2 \right] \nonumber \\
    &&= -\int_{\Rr}\frac{1}{(t-\lambda)(t-\mu)}dt 
    + O(|\Im\mu|^{-3/2})O(|\Im\lambda|^{-1/2})\nonumber \\ && \hspace{5pt}
    + O(|\Im\lambda|^{-3/2})
    \left(O(|\Im\mu|^{-1/2})+\|\psi\|_2\frac{|\eta(\mu)|}{|\Im\mu|}\right).
\eea
Assuming that $\Im\lambda\cdot\Im\mu<0$, we know that 
\[ -\int_{\Rr}\frac{1}{(t-\lambda)(t-\mu)}dt = \frac{\pm 2\pi i}{\lambda-\mu} . \]
Put $\lambda = -\mu$ and letting $\Im\mu\rightarrow\infty$, we obtain
\[ (i\pi\mbox{sign}(\Im\lambda)-B)c_f = \frac{\pm 2\pi i}{2\lambda} + O(|\lambda|^{-2}). \]
For one choice of $\mbox{sign}(\Im\lambda)$ at least, $i\pi\mbox{sign}(\Im\lambda)-B\neq 0$
and so we can recover $B$ from the asymptotic behaviour of $c_f$ as $
\Im\lambda\rightarrow\infty$.

{\bf 3. Recovering $\widehat{\overline{\phi}}(\lambda)/D(\lambda)$.}
Once again we choose $g=g_\mu$ of the form (\ref{eq:mm11}). Returning to (\ref{eq:mm2})
and indicating the $\mu$-dependence of $f$ by writing $f=f_\mu=(A_B-\lambda)^{-1}g_\mu$, we have
\bea && (A_B-\lambda)^{-1}g_\mu - \frac{g_\mu(x)}{x-\lambda} - \frac{c_{f_\mu}(\lambda)}{x-\lambda}\\ &&
 = -\frac{\psi(x)}{x-\lambda}\frac{1}{D(\lambda)}\left[\ll \frac{g_\mu}{t-\lambda},\phi\rr
  + c_{f_\mu}(\lambda)\ll\frac{1}{t-\lambda},\phi\rr\right]. \eea
Since the left hand side of this equation is known and since $\psi$ is known, this implies that
\[ \frac{1}{D(\lambda)}\left[\ll \frac{g_\mu}{t-\lambda},\phi\rr + c_{f_\mu}(\lambda)
\ll\frac{1}{t-\lambda},\phi\rr\right] \]
is known. Substituting the known choice of $g_\mu$ we discover that
\[ \frac{\lambda-\mu}{D(\lambda)}\left[ \ll \frac{1}{(t-\lambda)(t-\mu)},\phi\rr
 - \eta(\mu)\ll \frac{\psi}{(t-\lambda)(t-\mu)},\phi\rr
  + c_{f_\mu}(\lambda)\ll \frac{1}{t-\lambda},\phi\rr\right] \]
is known too. Using identity (\ref{eq:mm12}) this means that
\be \frac{1}{D(\lambda)}\left[\widehat{\overline\phi}(\lambda)-\widehat{\overline{\phi}}(\mu)
 - \eta(\mu)(D(\lambda)-D(\mu)) + (\lambda-\mu)c_{f_\mu}(\lambda)
 \widehat{\overline{\phi}}(\lambda) \right]\label{eq:mm13}
 \ee
is known. We shall now fix $\lambda$ and let $\Im\mu\rightarrow \infty$, for which purpose
we need to know how $(\lambda-\mu)c_{f_\mu}(\lambda)$ will behave. From (\ref{eq:mm10}),
we have 
\begin{align}
 c_{f_\mu}(\lambda)(\lambda-\mu) = (\lambda-\mu)M_B(\lambda)\left[
  - \ll \frac{1}{t-\lambda},\frac{1}{t-\overline{\mu}}\rr + \eta(\mu)\ll\frac{1}{t-\lambda},
   \frac{\overline{\psi}}{t-\overline{\mu}}\rr \right. \nonumber \\
   \left. + \frac{\widehat{\psi}(\lambda)}{D(\lambda)}
    \left\{ \ll \frac{1}{(t-\lambda)(t-\mu)},\phi\rr - \eta(\mu)\ll\frac{\psi}{(t-\lambda)(t-\mu)},
    \phi\rr\right\} \right] .
    \end{align}
Choosing $\mu\neq \lambda$ with $\Im\lambda\cdot\Im\mu>0$ causes the integral term
$\ll \frac{1}{t-\lambda},\frac{1}{t-\overline{\mu}} \rr $ to vanish. This yields
\bea && c_{f_\mu}(\lambda) \\ && = M_B(\lambda)\left[\eta(\mu)(\widehat{\psi}(\lambda)-\widehat{\psi}(\mu))
   + \frac{\widehat{\psi}(\lambda)}{D(\lambda)}(\widehat{\overline{\phi}}(\lambda)-
   \widehat{\overline{\phi}}(\mu)) - \eta(\mu)(D(\lambda)-D(\mu))\right] \\ &&  \rightarrow M_B(\lambda)\frac{\widehat{\psi}(\lambda)}{D(\lambda)}\widehat{\overline{\phi}}(\lambda),
\eea
as $\Im\mu\rightarrow\infty$.
Letting $\Im\mu\rightarrow\infty$ in (\ref{eq:mm13}) therefore yields that
\be \frac{1}{D(\lambda)}\left[\widehat{\overline{\phi}}(\lambda) +
 M_B(\lambda) \frac{\widehat{\psi}(\lambda)}{ D(\lambda)}\widehat{\overline{\phi}}(\lambda)^2 \right]
 \label{eq:mm15}
 \ee
is known. However, taking account of  (\ref{eq:9}),  the known quantity appearing in (\ref{eq:mm15}) is  
\[   M_B(\lambda)\frac{\widehat{\overline{\phi}}(\lambda)}{ D(\lambda)}\left[ i \pi \mbox{sign}(\Im\lambda)-B\right] .
\]
This means that 
$ \alpha := M_B(\lambda)\widehat{\overline{\phi}}(\lambda) D(\lambda)^{-1} $
is known, and simple algebra shows that
\be \frac{\widehat{\overline{\phi}}(\lambda)}{ D(\lambda)}(1+\alpha\widehat{\psi}(\lambda)) = 
\alpha(i\pi\mbox{sign}(\Im\lambda)-B), 
\label{eq:mm16}
\ee
which determines $\frac{\widehat{\overline{\phi}}(\lambda)}{ D(\lambda)}$ and hence $M_B(\lambda)$ provided the factor 
$1+\alpha\widehat{\psi}(\lambda)$ is not identically zero; equivalently, provided
$i\pi \mbox{sign}(\Im\lambda)-B$ is not zero.

We are therefore left to rule out just one pathological case: the case in which $B=i\pi\mbox{sign}(\Im(\lambda))$
in one half-plane and $\widehat{\overline{\phi}}\widehat{\psi}\equiv 0$ in the same half-plane. This can only happen
if $M_B(\lambda)^{-1}$ is zero in this half-plane, which means that every point in the half-plane is an eigenvalue
of $A_B$ and the corresponding $g_\lambda$ given by
\[ g_\lambda(x) = \frac{1}{x-\lambda}-\frac{\widehat{\overline\phi}(\lambda)}{ D(\lambda)}\frac{\psi(x)}{
x-\lambda} = \frac{1}{x-\lambda} \]
belongs to $L^2(\Rr)$ and also satisfies the conditions to lie in the domain of $A_B$:
\[ i\pi \sign(\Im\lambda) = i\pi \mbox{sign}(\Im\lambda) - \frac{\widehat{\overline{\phi}}(\lambda)\widehat{\psi}
(\lambda)}{ D(\lambda)} = \Gamma_1 g_\lambda = B \Gamma_2 g_\lambda 
 = B \]
(see (6.16) in \cite{BHMNW09}). This determines $\widehat{\overline\phi}(\lambda)/ D(\lambda)$, and the proof is complete.
\end{proof}
\begin{Remark} ({\bf Uniqueness of $g_\mu$}). 
An alternative approach can be found by examining the uniqueness of the function $g_\mu$ in $\Sco$
defined in (\ref{eq:mm11}). If we know that the choice of $\eta(\mu)$ is unique then we can immediately
determine $\widehat{\overline{\phi}}(\mu)/ D(\mu)$, which must be equal to $\eta(\mu)$. This is determined
by $g_\mu$ if $g_\mu$ is unique with its required properties. We examine this now.
\end{Remark}
\begin{Definition}
The non-uniqueness set is the set
\begin{align}
 \Omega = \left\{ \mu\in\Cc\setminus\Rr\, \Big| \, \exists \eta_1(\mu)\neq\eta_2(\mu): \;\;
\frac{1}{x-\mu}+\eta_j(\mu)\frac{\psi(x)}{x-\mu}\in\Sco, \;\; j = 1, 2\right\}.
\end{align}
Equivalently,
\[ \Omega = \left\{ \mu\in\Cc\setminus\Rr \, \Big| \, \frac{1}{x-\mu}\in\Sco \;\mbox{and}\; 
 \frac{\psi(x)}{x-\mu}\in \Sco \right\} .\]
 \end{Definition}
 We also let $\Omega_{\pm} = \Cc_{\pm}\cap\Omega$ and call the sets $\Cc_{\pm}\setminus\Omega_{\pm}$ the uniqueness sets in the upper an lower half-planes. We can ignore the condition
 $ D(\mu)\neq 0$ since it can be removed by taking a closure. We can also assume
 that $\overline{\mathcal{S}}\neq L^2(\Rr)$ since otherwise we know the whole resolvent
 $(A_B-\lambda)^{-1}$, which means we know $A_B$ and hence $M_B$. 
We consider two cases in $\Cc_+$ (the situation in $\Cc_{-}$ is similar):
(I) $\Cc_{+}\setminus\Omega_+$ has measure 0 and
(II) $\Cc_{+}\setminus\Omega_+$ has positive measure.

In case (II) the uniqueness set in $\Cc_+$, where we can recover $\widehat{\overline\phi}(\mu)/
 D(\mu)$ immediately from $g_\mu$, will have an accumulation point in $\Cc_{+}$
and thus $\widehat{\overline\phi}(\mu)/ D(\mu)$ is uniquely determined in $\Cc_{+}$, by 
analyticity.

In case (I) we have that for almost all $\mu\in\Cc_{+}$, the function $x\mapsto
(x-\mu)^{-1}$ lies in $\overline{\mathcal{S}}$. However
$ \bigvee_{\Im\mu>0}\frac{1}{x-\mu}$ is the Hardy space $H_2^{-}$, and hence
$\overline{\mathcal S}\supseteq H_2^{-}$. Consider the situation in $\Cc_{-}$. 
If we are in the case $|\Omega_{-}|>0$ then 
\[ \overline{\mathcal S} \supset \bigvee_{\mu\in\Omega_{-}}\frac{1}{x-\mu}
 = H_2^{+}, \]
and so we have proved the following.
\begin{Lemma}
 If $\Cc_{\pm}\setminus\Omega_{\pm}$ has measure zero, then
 $\overline{\mathcal S}$ contains $H^{2}_{\mp}$, respectively, while if $\Cc_{\pm}\setminus\Omega_{\pm}$ has positive measure then we
can recover $\widehat{\overline{\phi}}(\mu)/ D(\mu)$ uniquely, for 
$\mu\in \Cc_{\pm}$.
\end{Lemma}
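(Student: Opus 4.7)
The plan is to fix one half-plane, say $\Cc_+$ (the $\Cc_-$ case is handled by the symmetric argument, yielding $H^2_+\subseteq\Sco$ when $\Cc_-\setminus\Omega_-$ has measure zero), and to dichotomize on the two-dimensional Lebesgue measure of the uniqueness set $\Cc_+\setminus\Omega_+$. The two cases call for distinct ideas: analytic continuation from an accumulation point in the positive-measure case, and density of Hardy-space reproducing kernels in the measure-zero case.

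In the positive-measure case, for each $\mu$ in the uniqueness set (with $D(\mu)\neq 0$) the scalar $\eta(\mu)$ in the representation~(\ref{eq:mm11}) of $g_\mu\in\Sco$ is unique, essentially by the very definition of $\mu\notin\Omega_+$. Comparison with the explicit solution~(\ref{eq:mm4}) then forces the equality $\eta(\mu) = \widehat{\overline\phi}(\mu)/D(\mu)$, a value that can be read off directly from the known function $g_\mu$. Since $D$ is analytic on $\Cc_+$ with discrete zero set, the uniqueness set still has positive measure after removing these zeros, and so it contains an accumulation point interior to $\Cc_+$. The meromorphic function $\widehat{\overline\phi}/D$ is then determined on all of $\Cc_+$ by the identity theorem.

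In the measure-zero case, $\Omega_+$ has full measure in $\Cc_+$, and the alternate characterization of $\Omega_+$ gives $(x-\mu)^{-1}\in\Sco$ for every $\mu\in\Omega_+$. A direct estimate bounding the kernel via $|\Im\mu|$ shows that $\mu\mapsto(x-\mu)^{-1}$ is continuous from $\Cc_+$ into $L^2(\Rr)$; since $\Omega_+$ is then dense in $\Cc_+$ and $\Sco$ is closed, it follows that $(x-\mu)^{-1}\in\Sco$ for \emph{every} $\mu\in\Cc_+$. The closed linear span of $\{(x-\mu)^{-1}:\Im\mu>0\}$ in $L^2(\Rr)$ is precisely $H^2_-(\Rr)$, whence $H^2_-\subseteq\Sco$.

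The main obstacle I expect is the positive-measure case: one must justify carefully that $\eta(\mu)$ is genuinely forced on the uniqueness set (unpacking the equivalent definition of $\Omega_+$ to see that $(x-\mu)^{-1}$ and $\psi(x)(x-\mu)^{-1}$ are linearly independent modulo $\Sco$ off $\Omega_+$), and that excising the discrete set $\{D=0\}$ from a set of positive two-dimensional measure still leaves an accumulation point, so that analytic continuation is legitimate. The measure-zero case is essentially routine once $L^2$-continuity of the resolvent kernel is in hand, and the identification of $\overline{\bigvee_{\Im\mu>0}(x-\mu)^{-1}}$ with $H^2_-$ is a classical fact about reproducing kernels of Hardy spaces.
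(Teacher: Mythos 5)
Your proposal is correct and follows essentially the same route as the paper: the same dichotomy on the planar measure of $\Cc_{\pm}\setminus\Omega_{\pm}$, with recovery of $\eta(\mu)=\widehat{\overline{\phi}}(\mu)/D(\mu)$ on the uniqueness set followed by analytic continuation in the positive-measure case, and the identification of $\overline{\bigvee_{\Im\mu>0}(x-\mu)^{-1}}$ with the appropriate Hardy space in the measure-zero case. Your added details (the $L^2$-continuity of $\mu\mapsto(x-\mu)^{-1}$ to pass from a full-measure set to all of $\Cc_{+}$, and the care about excising the zeros of $D$) only make explicit what the paper leaves implicit.
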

\begin{Corollary}
Assume that the function $\widehat{\overline{\phi}}(\mu)/ D(\mu)$ in $\Cc_{+}$ coincides with the analytic continuation of $\widehat{\overline{\phi}}(\mu)/ D(\mu)$ in
$\Cc_{-}$. (This happens,
for instance, if $\phi$ has compact support or is zero on an interval.) Then
either $\overline{\mathcal{S}}=L^2(\Rr)$ or we can reconstruct
$\widehat{\overline{\phi}}(\mu)/ D(\mu)$ in $\Cc\setminus\Rr$ uniquely from
$(A_B-\lambda)^{-1}|_{\overline{\mathcal S}}$.
\end{Corollary}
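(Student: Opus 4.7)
The plan is to split into cases according to the Lebesgue measure of the uniqueness sets $\Cc_{\pm}\setminus\Omega_{\pm}$, apply the preceding Lemma in each case, and then invoke the standing analytic continuation hypothesis as a bridge between the two half-planes when only one of them has been handled directly.

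First I would address the case in which \emph{both} $\Cc_{+}\setminus\Omega_{+}$ and $\Cc_{-}\setminus\Omega_{-}$ have measure zero. The Lemma applied with the $+$ sign gives $\Sco\supseteq H_{2}^{-}$, and with the $-$ sign gives $\Sco\supseteq H_{2}^{+}$. Since $L^{2}(\Rr)=H_{2}^{+}\oplus H_{2}^{-}$ as a (non-orthogonal) sum of Hardy subspaces, these two inclusions together force $\Sco=L^{2}(\Rr)$, landing in the first alternative of the conclusion.

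Next I would treat the complementary case, in which at least one of $\Cc_{\pm}\setminus\Omega_{\pm}$ has positive measure. By symmetry between the two half-planes, it suffices to assume $\Cc_{+}\setminus\Omega_{+}$ has positive measure. The Lemma then delivers $\widehat{\overline{\phi}}(\mu)/D(\mu)$ for every $\mu\in\Cc_{+}$, reconstructed from $(A_{B}-\lambda)^{-1}|_{\Sco}$. By the corollary's hypothesis, the function so obtained on $\Cc_{+}$ is the analytic continuation across $\Rr$ of the corresponding function on $\Cc_{-}$; the identity theorem then transfers our knowledge of it from $\Cc_{+}$ to $\Cc_{-}$, giving $\widehat{\overline{\phi}}(\mu)/D(\mu)$ throughout $\Cc\setminus\Rr$.

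I do not expect significant technical obstacles, since the Lemma has already done the real analytic work; the corollary is essentially a case-analysis packaging of it. The one conceptual check worth recording is that the hypothesis is genuinely satisfied in the stated examples: if $\phi$ vanishes on an interval $I\subset\Rr$, then $\widehat{\overline{\phi}}$ extends analytically across $I$, and because the integrand $\psi\overline{\phi}$ appearing in $D(\lambda)-1$ also vanishes on $I$, $D$ likewise continues analytically across $I$, so the quotient $\widehat{\overline{\phi}}/D$ continues analytically across $I$ wherever $D$ is nonzero, matching its values on $\Cc_{+}$ with those on $\Cc_{-}$ as required. The same argument applies, a fortiori, when $\phi$ has compact support.
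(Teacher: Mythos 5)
Your proof is correct and follows essentially the same route as the paper: the dichotomy on the Lebesgue measure of $\Cc_{\pm}\setminus\Omega_{\pm}$ combined with the preceding Lemma, with the analytic-continuation hypothesis used to transfer the recovered function $\widehat{\overline{\phi}}/D$ from one half-plane to the other (the paper's own write-up only spells out the first alternative, where $\Sco=L^2(\Rr)$ and hence the full resolvent, $\phi$, $\psi$ and $B$ are known). One minor remark: for $L^2(\Rr)$ the decomposition $H_2^+\oplus H_2^-$ is in fact orthogonal, since the Riesz projections $P_\pm$ are orthogonal projections, though this does not affect your argument.
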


\begin{proof}
In the first case, under the hypotheses of Theorem \ref{thm:1}, we know $(A_B-\lambda)^{-1}$ and hence we know (a) $\phi$ if $\psi$ is not identically zero, (b) $\psi$ if $\phi$ is not identically
zero, (c) $B$ by checking the boundary conditions satisfied by elements of $\Dom(A) = \Ran((A_B-\lambda)^{-1})$.
\end{proof}

\section{Determining $\overline{\Sc}$ for the Friedrichs model}\label{section:7}
This section is devoted to a detailed analysis of the space $\overline{\mathcal S}$ for the Friedrichs model. 
We shall demonstrate how different aspects of complex analysis are brought into the problem of determining $\overline{\mathcal S}$
and we compute the defect number 
$ \mbox{def}(\overline{\mathcal S}) = \dim({\mathcal S}^\perp) $
for various different choices of the functions $\phi$ and $\psi$ which determine the model. 

We note that we analysed some cases of the Friedrichs model in \cite{BMNW17}. In particular, it contains a comprehensive study of the case of disjointly supported   $\phi$ and $\psi$.

{Before proceeding, we introduce some notation. Let $D(\lambda)$ be as in (\ref{eq:Ddef}). Denote by $D_\pm(\lambda)$ its restriction to $\C_\pm$ and (to shorten notation) by $D_\pm:=D_\pm(k\pm i0)$, $k\in\R$, the boundary values of these functions on $\R$ (which exist a.e., cf.~\cite{Koosis,Pri50}).
In general, the functions $D_\pm(\lambda)$ do not have a meromorphic continuation to the lower/upper half-plane. In cases when they do, we will continue to denote this extension by $D_\pm(\lambda)$. Note that this  extension will in general not coincide with $D(\mu)$ in the other half-plane.}

 
 We next give a characterisation of the space $\overline{\Sc}$, or, more precisely, its orthogonal complement from \cite[Proposition 7.2]{BMNW17}. The proof is based on the  definition of $\overline{\Sc}$ using \eqref{eq:defT} and on Lemma \ref{lem:HL}.
 
 \begin{proposition} \label{sperpcrit}
Let $P_\pm$ be the Riesz projections defined in \eqref{Riesz} and $D(\lambda)$ be as in (\ref{eq:Ddef}). 
\begin{enumerate}
	\item  Let $\phi,\psi\in L^2$. Then $g\in\overline{\Sc}^\perp $ if and only if
	\ben  &&  \label{eq:65}    P_+ \og-\frac{2\pi i}{D_+}(P_+\ophi)P_+(\psi\og)=0\; \hbox{ and }\;
 P_- \og+\frac{2\pi i}{D_-}(P_-\ophi)P_-(\psi\og)=0, \een 
if and only if
\ben \label{eq:66}
  &&   \begin{cases} \mathrm{(i)}\ \frac{(P_+\ophi)P_+(\psi\og)}{D_+}\in H_2^+, \;
  \mathrm{(ii)}\ \frac{(P_-\ophi)P_-(\psi\og)}{D_-}\in H_2^-, \\
  \mathrm{(iii)}\ \og-\frac{2\pi i}{D_+}(P_+\ophi)P_+(\psi\og)+\frac{2\pi i}{D_-}(P_-\ophi)P_-(\psi\og)=0\ (a.e.).
   \end{cases}
   \een
   \item If $\phi\in L^2,\psi\in L^2\cap L^\infty$ or $\phi,\psi\in L^2\cap L^4$, then $ g\in\overline{\Sc}^\perp$ if and only if any of the following three equivalent conditions holds:
   \ben \ \label{eq:67} && \left[ D_+- 2\pi i(P_+\ophi)\psi\right] \og = 2\pi i\ophi[\psi P_-\og-P_-(\psi\og)]\ (a.e.), \\ \label{eq:68}
   &&  \left[ D_+- 2\pi i(P_+\ophi)\psi\right] \og = 2\pi i\ophi[-\psi P_+\og+P_+(\psi\og)]\ (a.e.), \\ \label{eq:69}
   &&   \left[ D_+- 2\pi i(P_+\ophi)\psi\right] \og = 2\pi i\ophi[P_+(\psi P_-\og)-P_-(\psi P_+\og)]\ (a.e.). 
   \een
\end{enumerate}
\end{proposition}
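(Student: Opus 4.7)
The plan is to exploit the definition of $\overline{\Sc}$ in \eqref{eq:defT} as the closed span of $\Ran(S_{\mu,B})$ over $\mu\in\rho(A_B)\setminus\Rr$, together with the explicit form of these ranges from Lemma \ref{lem:HL}. By \eqref{eq:7}, $\ker(\At^*-\mu)$ is one-dimensional, spanned by
$$g_\mu(x) = \frac{1}{x-\mu}-\frac{\widehat{\overline{\phi}}(\mu)}{D(\mu)}\frac{\psi(x)}{x-\mu},$$
so $g\in \overline{\Sc}^\perp$ if and only if $\langle g_\mu,g\rangle_{L^2}=0$ for every non-real $\mu$ with $D(\mu)\neq 0$ (the exceptional set can be absorbed by analyticity). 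Expanding the inner product using the notation of \eqref{eq:mm9} gives the single meromorphic identity
$$D(\mu)\,\widehat{\og}(\mu) \;=\; \widehat{\overline{\phi}}(\mu)\,\widehat{\psi\og}(\mu), \qquad \mu\in\Cc\setminus\Rr.$$

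The next step is to pass to the boundary using the Plemelj--Sokhotski formulas encoded in \eqref{Riesz}, namely $\widehat{f}(k\pm i0)=\pm 2\pi i\,P_\pm f(k)$. Taking the limits $\mu\to k+i0$ and $\mu\to k-i0$ separately, and dividing out the $\pm 2\pi i$ factors, the identity above splits into
$$D_+\,P_+\og = 2\pi i\,(P_+\overline{\phi})\,P_+(\psi\og), \qquad D_-\,P_-\og = -2\pi i\,(P_-\overline{\phi})\,P_-(\psi\og),$$
which is \eqref{eq:65} after dividing by $D_\pm$. The equivalence with \eqref{eq:66} comes from two observations: (a) summing the two equations and using $\og=P_+\og+P_-\og$ yields condition (iii); (b) the left-hand sides $P_\pm\og$ already lie in $H_2^\pm$, so the right-hand sides must lie there too, giving (i) and (ii). Conversely, starting from (i)--(iii) and applying $P_\pm$ to (iii), the orthogonality $P_+P_-=P_-P_+=0$ between $H_2^+$ and $H_2^-$ recovers the two equations of \eqref{eq:65}.

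For Part 2, the starting point is the Plemelj jump
$$D_+-D_- = 2\pi i\,\psi\overline{\phi},$$
which follows from $D-1=\widehat{\psi\overline{\phi}}$ and the assumption $\psi\overline{\phi}\in L^1$. Writing $D_+\og=D_+P_+\og+D_+P_-\og$ and substituting the first equation of \eqref{eq:65} for $D_+P_+\og$, then replacing $D_+=D_-+2\pi i\psi\overline{\phi}$ in the term $D_+P_-\og$ to invoke the second equation, one collects terms. Using $\overline{\phi}=P_+\overline{\phi}+P_-\overline{\phi}$ and $\psi\og=P_+(\psi\og)+P_-(\psi\og)$, the cross terms cancel and one arrives at
$$[D_+-2\pi i\,(P_+\overline{\phi})\psi]\,\og = 2\pi i\,\overline{\phi}\,[\psi P_-\og - P_-(\psi\og)],$$
which is \eqref{eq:67}. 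The identity \eqref{eq:68} is obtained by the symmetric computation starting from $D_-\og$, and \eqref{eq:69} follows from the elementary algebraic identity
$$\psi P_-\og - P_-(\psi\og) \;=\; P_+(\psi P_-\og) - P_-(\psi P_+\og),$$
obtained by decomposing $\psi\og=\psi P_+\og+\psi P_-\og$ inside $P_-$.

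The main obstacle is the functional-analytic bookkeeping: to make sense of products such as $\psi\og$, $(P_+\overline{\phi})\psi$ and $P_\pm(\psi P_\mp\og)$ as honest $L^2$ (or at least $L^1_{\mathrm{loc}}$) functions on which the Riesz projections act. For $\phi,\psi\in L^2$ alone, $\psi\og$ is only guaranteed to be in $L^1$, which is why \eqref{eq:65}--\eqref{eq:66} are phrased in terms of Cauchy transforms rather than pointwise multiplication. The stronger assumptions $\psi\in L^\infty$ or $\phi,\psi\in L^4$ in Part 2 are precisely tailored so that $\psi\og\in L^2$ (respectively $\psi\og\in L^{4/3}$ with $(P_+\overline{\phi})\psi$ and similar products landing in spaces where all the projections and multiplications needed in \eqref{eq:67}--\eqref{eq:69} are well defined a.e.), allowing the boundary-value identity to be rewritten as a single almost-everywhere equation on $\Rr$.
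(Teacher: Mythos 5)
Your proposal is correct and takes essentially the same route as the paper's proof (which it imports from \cite{BMNW17}, Proposition 7.2): orthogonality of $g$ to the one-dimensional ranges of the solution operators from \eqref{eq:defT} and Lemma \ref{lem:HL}, yielding the identity $D(\mu)\widehat{\og}(\mu)=\widehat{\overline{\phi}}(\mu)\widehat{\psi\og}(\mu)$, whose Plemelj--Sokhotski boundary values give \eqref{eq:65}--\eqref{eq:66}, with the jump relation $D_+-D_-=2\pi i\psi\ophi$ producing the pointwise forms \eqref{eq:67}--\eqref{eq:69} under the extra integrability hypotheses. No essential step is missing.
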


\begin{remark}
\begin{enumerate}
	\item The second part of the proposition allows us to replace all three conditions (i)-(iii) in \eqref{eq:66} with a single pointwise condition under mild extra assumptions on $\phi$ and/or $\psi$.
		\item Note that the operator $[P_+(\psi P_-\og)-P_-(\psi P_+\og)]$ in the last characterisation of $\overline{\Sc}^\perp$ is the difference of two  Hankel operators.
\end{enumerate}
\end{remark}

As an immediate consequence of \eqref{eq:67}, we get
\begin{theorem}\label{Schar} Assume $\phi\in L^2,\psi\in L^2\cap L^\infty$ or $\phi,\psi\in L^2\cap L^4$.
Define the operator $L$ on $L^2(\R)$ by 
\be\label{eq:70}
Lu = [-P_+(\psi\ophi)+P_+(\ophi)\psi] u +\ophi[\psi P_--P_-\psi] u
\ee 
with the maximal domain $\Dom(L)=\{u\in L^2(\R): \ Lu\in L^2(\R)\}$.\footnote{Under our assumptions, for any $u\in L^2$ we have $Lu\in L^1$ (where we mean the expression $L$ in \eqref{eq:70}, not the operator $L$).} 
Then $\overline{\Sc}\neq L^2(\R)$ iff $1/(2\pi i)\in\sigma_p(L)$ and $\overline{\Sc}^\perp = \ker(L-1/(2\pi i))$.

Furthermore, let $\eta\in L^\infty(\R)$ be a function such that $\eta(k)\neq 0\ a.e.$ and $\eta[-P_+(\psi\ophi)+P_+(\ophi)\psi]$, $\eta\psi\ophi$, $\eta\ophi\in L^\infty(\R)$.  Define the operator $\cL$ on $L^2(\R)$ by
\be
\cL u = \eta\left[-\frac{1}{2\pi i}-P_+(\psi\ophi)+P_+(\ophi)\psi\right] u +\eta\ophi[\psi P_--P_-\psi] u
\ee
with dense domain $\Dom(\cL)=\{u\in L^2(\R): \eta\ophi P_-(\psi u)\in L^2(\R)\}$.
Then $\overline{\Sc}\neq L^2(\R)$ iff $0\in\sigma_p(\cL)$. Moreover, $\overline{\Sc}^\perp = \ker \cL$. Note that if $\psi\in L^\infty$, then $\Dom(\cL)=L^2(\R)$.
\end{theorem}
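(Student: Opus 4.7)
The plan is to derive the eigenvalue equation $L\og=\tfrac{1}{2\pi i}\og$ by a direct algebraic rearrangement of~\eqref{eq:67}. Using~\eqref{Riesz} and~\eqref{eq:Ddef}, the boundary value satisfies $D_+=1+2\pi i\,P_+(\psi\ophi)$. Substituting this into the left-hand side of~\eqref{eq:67} and dividing by $2\pi i$ regroups the $P_+$-terms so that the equation becomes
\[
\bigl[-P_+(\psi\ophi)+P_+(\ophi)\psi\bigr]\og + \ophi[\psi P_- - P_-\psi]\og = \tfrac{1}{2\pi i}\,\og,
\]
which is exactly $L\og=\tfrac{1}{2\pi i}\og$. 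Since $\og\in L^2(\Rr)$ the right-hand side lies in $L^2(\Rr)$, so $\og$ automatically belongs to the maximal domain of $L$, and the rearrangement is evidently reversible. This identifies $\overline{\Sc}^\perp=\ker(L-\tfrac{1}{2\pi i})$ and therefore $\overline{\Sc}\ne L^2(\Rr)$ iff $\tfrac{1}{2\pi i}\in\sigma_p(L)$. The claim $Lu\in L^1$ asserted in the footnote is a routine Hölder bookkeeping under either of the two alternative hypotheses, combined with $L^2$-boundedness of the Riesz projections and of the Hankel commutator $[\psi,P_-]$.

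For the regularised operator $\cL$ I would write down the pointwise identity
\[
\cL u \;=\; \eta\bigl(L-\tfrac{1}{2\pi i}\bigr)u,
\]
valid wherever both sides make sense. The three $L^\infty$ hypotheses on $\eta$ and its products are designed exactly so that, for arbitrary $u\in L^2(\Rr)$, the four summands $\tfrac{\eta}{2\pi i}u$, $\eta[P_+(\ophi)\psi-P_+(\psi\ophi)]u$, and $\eta\psi\ophi\cdot P_-u$ already lie in $L^2(\Rr)$; the only remaining term, $\eta\ophi P_-(\psi u)$, is controlled precisely by the restriction appearing in the definition of $\Dom(\cL)$. Because $\eta\ne 0$ almost everywhere, on this domain $\cL u=0$ is equivalent to $(L-\tfrac{1}{2\pi i})u=0$.

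It remains to show that $\ker(L-\tfrac{1}{2\pi i})\subseteq\Dom(\cL)$, so that the two kernels really coincide. For $u\in\ker(L-\tfrac{1}{2\pi i})$ I would solve the eigenvalue equation for $\ophi P_-(\psi u)$, multiply through by $\eta$, and read off that $\eta\ophi P_-(\psi u)$ equals a sum of terms each already known to be in $L^2(\Rr)$ by the preceding bookkeeping; hence $u\in\Dom(\cL)$ and $\cL u=0$. This yields $\ker\cL=\overline{\Sc}^\perp$. The assertion $\Dom(\cL)=L^2(\Rr)$ when $\psi\in L^\infty$ is immediate: then $\psi u\in L^2$, $P_-(\psi u)\in L^2$, and multiplication by $\eta\ophi\in L^\infty$ keeps it in $L^2$. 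The main (mild) obstacle is really just bookkeeping---matching each hypothesis on $\eta$ to the correct summand---and no genuinely new analytic input beyond Proposition~\ref{sperpcrit} is required.
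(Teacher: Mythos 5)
Your proposal is correct and follows exactly the route the paper intends: the theorem is stated there as an immediate consequence of \eqref{eq:67}, obtained by substituting the Sokhotski--Plemelj boundary value $D_+=1+2\pi i\,P_+(\psi\ophi)$ and dividing by $2\pi i$, with the domain and $\ker\cL=\ker(L-\tfrac{1}{2\pi i})$ bookkeeping handled just as you describe.
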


\begin{remark}\label{rem:Salpha}
Replacing $\psi$ by $\alpha\psi$, we denote the corresponding detectable subspace by $\Sco_\alpha$. Then,
under the conditions in the second part of Proposition \ref{sperpcrit}, we get $g\in \Sc_\alpha^\perp$ iff
		$$\frac{1}{2\pi i \alpha}\og =[-P_+(\psi\ophi)+P_+(\ophi)\psi]\og +\ophi [P_+\psi P_- -P_-\psi P_+]\og =L\og,$$
		where the right hand side is the sum of a multiplication operator and the difference of two Hankel operators multiplied by $\ophi$. As in the theorem, we then get  $\Sc_\alpha^\perp\neq \{0\}$ iff $1/(2\pi i\alpha)\in \sigma_p(L)$ and $\Sc_\alpha^\perp$ is given by the corresponding kernel.
\end{remark}

\subsection{Results with $\phi, \psi \in H_2^+$}

We note that in the Fourier picture described in Remark \ref{Fourier}, the condition that $\phi, \psi \in H_2^+$ corresponds to $\cF\phi, \cF\psi$ being supported in $\R^-$ (by the Paley-Wiener Theorem \cite{Koosis}). A similar remark applies to the next subsection when $\overline{\phi},\psi\in H_2^+$ where the Fourier transforms will be supported on different half lines.
Moreover, similar results will hold if both $\phi, \psi \in H_2^-$.

\begin{proposition}\label{Schar++}
Let $\phi, \psi \in H_2^+$. Then 
$$ g\in\Sc^\perp
  \Longleftrightarrow  \begin{cases} \mathrm{(I)}\ g\in H_2^+, \\
  \mathrm{(II)}\  \og = - \frac{2\pi i}{D_-}\ophi P_-(\psi\og) \ (a.e.).
   \end{cases}$$
\end{proposition}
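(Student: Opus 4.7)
My plan is to apply Proposition \ref{sperpcrit} (part 1) directly, specializing the general equations \eqref{eq:65} to the Hardy-space setting. The key structural fact I will exploit is that if $\phi \in H_2^+$, then complex conjugation places $\ophi \in H_2^-$, so the Riesz projections act simply: $P_+ \ophi = 0$ and $P_- \ophi = \ophi$. This is exactly what turns the two coupled equations in \eqref{eq:65} into the decoupled form (I) + (II) asserted in the proposition.

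For the forward direction, I will first substitute $P_+\ophi = 0$ into the first equation of \eqref{eq:65}. The term $(P_+\ophi) P_+(\psi\og)$ vanishes, leaving $P_+\og = 0$, which by the defining property of boundary values of Hardy functions is equivalent to $\og \in H_2^-$, i.e.\ $g \in H_2^+$; this is (I). Then I will substitute $P_-\ophi = \ophi$ into the second equation of \eqref{eq:65} to obtain
$$ P_-\og + \frac{2\pi i}{D_-}\ophi P_-(\psi\og) = 0. $$
Using (I) in the form $\og = P_-\og$, this reduces immediately to (II).

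For the converse, assuming (I) and (II), I will verify both equations in \eqref{eq:65}. The first is automatic: $g \in H_2^+$ gives $P_+\og = 0$, and $P_+\ophi = 0$ kills the remaining term. The second follows from (II) together with $P_-\ophi = \ophi$ and $P_-\og = \og$, which hold by virtue of $\phi, g \in H_2^+$. Proposition \ref{sperpcrit} then delivers the conclusion.

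There is no genuine obstacle here: the proposition is a direct specialization of Proposition \ref{sperpcrit} made possible by the vanishing of one of the two Riesz projections of $\ophi$. It is worth remarking that the hypothesis $\psi \in H_2^+$ is not used in this simplification — only $\phi \in H_2^+$ enters — but it is natural in light of the symmetric role of $\phi$ and $\psi$ in the Friedrichs model and will be relevant for the subsequent analysis of the nonlinear condition (II).
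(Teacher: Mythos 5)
Your proposal is correct and follows essentially the same route as the paper: both apply the characterisation \eqref{eq:65} from Proposition \ref{sperpcrit} and use $\ophi\in H_2^-$, hence $P_+\ophi=0$ and $P_-\ophi=\ophi$, to reduce the first equation to $P_+\og=0$ (condition (I)) and the second to (II). Your observation that only $\phi\in H_2^+$ is needed for this step is also consistent with the paper's argument.
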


\begin{proof}
We consider the conditions in \eqref{eq:65}. As $\ophi\in H_2^-$, we have $P_+\ophi=0$, giving $P_+\og=0$, hence $\og\in H_2^-$ and $g\in H_2^+$. Since $P_-\og=\og$ and $P_-\ophi=\ophi$, the second condition in \eqref{eq:65} becomes (II).
\end{proof}

\begin{theorem}\label{S++} Let $\phi, \psi \in H_2^+$. Then 
$$\Sco=\overline{\bigvee_{\mu \in \C^+} \dfrac1{x-\mu} + \bigvee_{\mu \in \C^-} \dfrac{  D(\mu) + 2 \pi i \bar \phi(\mu) \psi(x)}{x-\mu}}.$$
Moreover, if 
$\psi(x)= \sum_{j=1}^N \frac{c_j}{x-z_j}$
with $c_j\neq 0$, $\Im z_j<0$ and $z_i\neq z_j$ for $i\neq j$, then 
{
\begin{itemize}
	\item the rational function $D_+(\mu)$, $\mu\in\C_+$, has a meromorphic continuation to the lower half-plane and is given by $D_+(\mu)=1+2\pi i\sum_{j=1}^N c_j\overline{\phi}(z_j)(\mu-z_j)^{-1}$
for $\mu\in\C$ (note that this will not coincide with $D(\mu)$ in the lower half-plane and that for generic $\phi\in H_2^+$ the continuation of the function $D_-(\mu)$ to $\C_+$ will not even exist),
	\item $\ind( {\Sc})=N-P-M-M_0,$
where $P=\sum p_k$ and $p_k$ is the order of poles of $\overline{\phi(\overline{\mu})}/D_+(\mu)$
 in $\C_-\setminus\{z_j\}_{j=1}^N$, $M=\sum m_i$, where $m_i$ are the `order of the poles' of $\overline{\phi(x)}/D_+(x)$ in $\R$ (i.e.~$m_i$ is the minimum integer such that $ (x-x_i)^{m_i}\overline{\phi(x)}/D_+(x)$ is square integrable), $M_0$ corresponds to a degenerated case and is given by
$$M_0=\left\vert\left\{j:\overline{\phi(\overline{z_j})}=0\hbox{ and } \lim_{\mu\to z_j}\dfrac{ 2 \pi i \bar \phi(\mu) c_j}{D_+(\mu)(\mu-z_j)} \neq 1\right\}\right\vert.$$
\end{itemize}} 
\end{theorem}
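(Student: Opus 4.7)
\medskip\noindent\textbf{Proof plan.} \emph{Formula for $\overline{\Sc}$.} Since $\overline{\Sc}=\overline{\Tc_B}$ (Proposition~2.9 of \cite{BMNW17}, invoked in the Remark after Section~2), it suffices to compute the closed linear span of $\mathrm{Ran}(S_{\mu,B})$ as $\mu$ varies over $\C\setminus\R$. By Lemma~\ref{lem:HL}, this range is spanned by $g_\mu(x)=(x-\mu)^{-1}-\widehat{\overline{\phi}}(\mu)\psi(x)/(D(\mu)(x-\mu))$. Because $\phi\in H_2^+$ implies $\overline{\phi}\in H_2^-$, closing the Cauchy contour in the analytic half-plane gives
\[
\widehat{\overline{\phi}}(\mu)=\begin{cases} 0, & \mu\in\C^+,\\ -2\pi i\,\overline{\phi(\overline{\mu})}, & \mu\in\C^-.\end{cases}
\]
Substituting and absorbing the (a.e.~nonzero) scalar $D(\mu)$ inside the closed span produces the stated formula.

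\medskip\noindent\emph{Reducing $\overline{\Sc}^\perp$ to a finite linear system.} Set $h:=\overline{g}$. By Proposition~\ref{Schar++}, $g\in\overline{\Sc}^\perp$ iff $h\in H_2^-$ and $h=-(2\pi i\,\overline{\phi}/D_-)\,P_-(\psi h)$. For $\psi(x)=\sum_j c_j/(x-z_j)$ with $z_j\in\C^-$, the decomposition
\[
\frac{h(x)}{x-z_j}=\frac{h(x)-h(z_j)}{x-z_j}+\frac{h(z_j)}{x-z_j},
\]
whose first summand lies in $H_2^-$ and whose second, since $z_j\in\C^-$, lies in $H_2^+$, yields $P_-(\psi h)=\psi h-\sum_j c_j\alpha_j/(x-z_j)$ with $\alpha_j:=h(z_j)$. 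Inserting this and using the Plemelj jump $D_+-D_-=2\pi i\,\psi\overline{\phi}$ collapses the equation to
\[
D_+(x)\,h(x)=2\pi i\,\overline{\phi(x)}\sum_{j=1}^N\frac{c_j\alpha_j}{x-z_j}.
\]
A partial-fraction evaluation of $D(\mu)$ (using the given form of $\psi$) gives the announced rational extension $D_+(\mu)=1+2\pi i\sum_j c_j\overline{\phi(\overline{z_j})}/(\mu-z_j)$, so every candidate reads $h(\mu)=(2\pi i\,\overline{\phi(\overline{\mu})}/D_+(\mu))\sum_j c_j\alpha_j/(\mu-z_j)$.

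\medskip\noindent\emph{Counting.} The $N$ parameters $(\alpha_1,\dots,\alpha_N)$ must render this formula an element of $H_2^-$. Three linear constraint families arise: (i) each pole of $\overline{\phi(\overline{\mu})}/D_+(\mu)$ in $\C^-\setminus\{z_j\}$ (which can only come from a zero of $D_+$, since $\overline{\phi(\overline{\cdot})}$ is analytic in $\C^-$) of order $p_k$ forces $\sum c_j\alpha_j/(\mu-z_j)$ to vanish to order $p_k$ there, giving $P$ conditions; (ii) at each real singular point $x_i$, the minimality property of $m_i$ translates the $L^2$-condition into vanishing of $\sum c_j\alpha_j/(x-z_j)$ to order $m_i$ at $x_i$, giving $M$ conditions; (iii) self-consistency $h(z_j)=\alpha_j$ is, by a residue computation, automatic whenever $\overline{\phi(\overline{z_j})}\neq 0$ (the poles of $F(\mu):=\sum c_k\alpha_k/(\mu-z_k)$ and of $D_+$ at $z_j$ cancel with ratio exactly $1$); if $\overline{\phi(\overline{z_j})}=0$, a Taylor expansion reduces it to $\alpha_j(1-\ell_j)=0$ where $\ell_j$ is precisely the limit in the theorem, so the $M_0$ indices with $\ell_j\neq 1$ impose the extra condition $\alpha_j=0$. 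The total is $\mathrm{def}(\overline{\Sc})=N-P-M-M_0$.

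\medskip\noindent\emph{Main obstacle.} The delicate part is establishing that the three classes of linear constraints above are jointly independent and genuinely exhaust every obstruction to $h\in H_2^-$, which requires a uniform Laurent/Taylor bookkeeping at all the distinguished points in $\C^-$ and on $\R$. The $M_0$ term, where $D_+$ and $\overline{\phi(\overline{\cdot})}$ simultaneously vanish at some $z_j$ and the standard residue cancellation degenerates, is the trickiest case and dictates the precise form of the limit criterion appearing in the theorem.
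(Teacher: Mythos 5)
Your plan follows essentially the same route as the paper's proof: identify $\Sco$ with $\overline{\Tc}$ and solve $(\At^*-\mu)u=0$ using the vanishing of the Cauchy transform of $\ophi$ in the half-plane where $\ophi$ is analytic; then use Proposition \ref{Schar++} together with the jump relation $D_+-D_-=2\pi i\,\psi\ophi$ and the splitting of $\og/(x-z_j)$ into $H_2^\mp$ pieces to reduce membership of $\Sc^\perp$ to the finite ansatz $\og(\mu)=\bigl(2\pi i\,\overline{\phi(\overline{\mu})}/D_+(\mu)\bigr)\sum_j c_j\alpha_j(\mu-z_j)^{-1}$. Your identification of the three constraint families, including the degenerate condition $\alpha_j(1-\ell_j)=0$ producing $M_0$, matches the intended argument (the paper writes out only the generic case $M=M_0=0$ and asserts the degenerate cases are similar), and your verification that the consistency conditions $h(z_j)=\alpha_j$ are automatic when $\overline{\phi(\overline{z_j})}\neq0$ agrees with the paper's residue computation.

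The one genuine gap is precisely the point you flag and then leave open: to get the equality $\ind(\Sc)=N-P-M-M_0$, rather than only the lower bound $\ind(\Sc)\ge N-P-M-M_0$, you must prove that the $P$ conditions (and their $M$, $M_0$ companions) on $(\alpha_1,\dots,\alpha_N)$ form a full-rank system. The paper closes this with a concrete polynomial-degree argument: a nontrivial dependence among the rows would yield a rational function $F(z)=\sum_{k}\sum_{n\le p_k}\alpha_{n,k}(\mu_k-z)^{-n}$ vanishing at every $z_j$; then $Q(z)=F(z)\prod_k(\mu_k-z)^{p_k}$ is a polynomial of degree strictly less than $P$, while the normalization $D_+(\mu)\to1$ as $\Im\mu\to\infty$ forces $D_+$ to have at least as many poles as zeros, hence $N\ge P$; so $Q$ is a polynomial of degree $<N$ with $N$ zeros, whence $Q\equiv0$ and the dependence is trivial. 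You need this (or an equivalent) rank argument to finish the count. A second, smaller omission: in the first step you ``absorb the a.e.\ nonzero scalar $D(\mu)$ into the span,'' but at the exceptional points where $D(\mu)=0$ the kernel of $\At^*-\mu$ degenerates (such $\mu$ are eigenvalues of every $A_B$), and the paper handles these by an approximation from neighbouring values of $\mu$; this should be said explicitly rather than absorbed silently.
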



\begin{remark} 
It is possible to choose rational $\phi$ and $\psi$ in $H_2^+(\R)$  so that the defect number $N-P$ of Theorem \ref{S++} takes any value between
$0$ and $N-1$, while the corresponding defect number $\tilde{N}-\tilde{P}$ for $\tilde{\mathcal S}$ takes any value between
$0$ and $\tilde{N}-1$, independently of the value of $N-P$.
Therefore, any values can be realized for the defect numbers of $\Sc$ and $\tilde{\Sc}$. 
\end{remark}


\begin{proof}[Proof (outline)]
We use the fact that $\overline{\mathcal S}=\overline{\mathcal T}$ where $\mathcal T$ is as defined
in (\ref{eq:mm1}): the elements of ${\mathcal T}$ are found by solving $(\tilde{A}^*-\mu)u = 0$ and varying
$\mu$ over the resolvent set of some appropriate operators $A_B$. 
We therefore start by solving
$$
(\At ^* -\mu)u=(x-\mu)u-c_u  {\bf 1} +  \langle u,\phi \rangle \psi=0
$$
where $\phi, \psi \in H_2^+$. Dividing by $(x-\mu)$ we find that 
$
u=( c_u {\bf 1} - \langle u,\phi \rangle \psi)(x-\mu)^{-1}.
$
Taking the inner product with $\phi$ we get
$
D(\mu)\langle u,\phi \rangle - \llangle \frac{c_u}{x-\mu}, \phi\rrangle =0.
$

There are two cases to consider.

(1) $\mu \in \C_+$.  This means $\llangle \dfrac{1}{x-\mu},\phi\rrangle =0$, and therefore $D(\mu)\langle u,\phi \rangle=0$.
There are two subcases to consider.
\begin{enumerate}
\item[(1a)]
$
D(\mu)\neq   0$ which implies $\langle u,\phi \rangle =0$, giving $u=\dfrac{{\bf 1}}{x-\mu}$ up to arbitrary constant multiples.
\item[(1b)]
$D(\mu)=0$ giving $u=\dfrac{ c_u {\bf 1} -\tilde c \psi}{ x-\mu}$ for arbitrary values $c_u$ and $\tilde c$.
For any boundary condition $B$, by  suitable choice of the two constants we see that $\mu$ belongs to the spectrum of $A_B$. Therefore these functions are not included in the space $\Sco$. However, functions $\dfrac{{\bf 1}}{x-\mu}$ are in $\Sco$ due to being able to approximate them using neighbouring values of $\mu$. 
\end{enumerate}

(2)
We take $\mu\in \C_-$. Then
$ \langle u,\phi\rangle D(\mu)=\llangle \dfrac{ c_u}{x-\mu},\phi \rrangle = -2 \pi i c_u \bar\phi(\mu).$
There are  some subcases to consider.
\begin{itemize}
\item[(2a)]
$D(\mu) \neq 0$ which implies $u=c_u \dfrac{  1+(2\pi i \bar \phi (\mu)/D(\mu))\psi}{x-\mu}$ for arbitrary  $c_u$;
\item[(2b)]
$D(\mu)=0,$ $\bar \phi(\mu)=0$ giving by explicit calculation a two dimensional kernel: $u=\dfrac{ c_u {\bf 1} -\tilde c \psi}{x-\mu}$ for arbitrary values $c_u$ and $\tilde c$;
\item[(2c)]
$D(\mu)=0, \; \bar \phi(\mu)\neq 0$ giving $c_u=0$ and $u=\tilde{c}\dfrac\psi{x-\mu}$ for any $\tilde c$.
\end{itemize}
In the case (2b) for any boundary condition $B$, by  suitable choice of the two constants we see that $\mu$ belongs to the spectrum of $A_B$. Therefore these functions are not included in the space $\Sco$. In the case (2c) the function $\dfrac\psi{x-\mu}$ should be included in $\Sco$. There is only one $B$ for which it is an eigenfunction (formally $B=\infty$), but even for this choice of $B$ it can be approximated by elements from neighbouring kernels with $D(\mu)\neq 0$.
Note that this means that $\Sco$ is independent of $B$ as expected.
This proves the formula
for $\overline{\mathcal T} = \overline{\mathcal S}$ in the theorem.

We now obtain the expression for the dimension of $\Sc^\perp$, in the generic case $M=0=M_0$, when
$\psi(x) = \sum_{j=1}^n c_j/(x-z_j)$, where the $z_j$ are distinct, lie in $\C_-$ and the $c_j$ are all non-zero. We know that 
$g\in {\mathcal S}^\perp$ if and only if $g$ satisfies both  (I) and (II) in Proposition \ref{Schar++}:
{Using the definition of $D_-$ and the fact that $P_{-}=I-P_{+}$ the second condition becomes
\be 
0= (1-2\pi i P_{-}(\psi\overline{\phi})+2\pi i \overline{\phi} \psi)\overline{g}-2\pi i\ophi P_{+}(\psi\overline{g}) = (1+2\pi i P_{+}(\psi\overline{\phi}))\overline{g}-2\pi i\ophi P_{+}(\psi\overline{g}). \label{eq:fedupnow}\ee
The first bracket  gives $D_+$ and by Proposition \ref{Schar++} we know that $\overline{g}\in H_2^-$ and so, taking boundary values, (\ref{eq:fedupnow}) becomes}
\[ D_+(x)\overline{g}-2\pi i \overline{\phi}\sum_{j=1}^N c_j P_{+}\left(\frac{1}{x-z_j}\overline{g}\right) = 0, \quad g\in H_2^+, x\in\R\]
in which $D_+(x)$ are the boundary values on the real line of the function $D_+(\mu) = 1 + \int_{\mathbb R}\frac{\psi(x)\overline{\phi(x)}}{x-\mu}dx$, $\mu\in {\mathbb C}_+$. Thus by the Residue Theorem,
\be g\in H_2^+, \quad \overline{g}(x) = \frac{2\pi i \overline{\phi}(x)}{D_+(x)} \sum_{j=1}^N\frac{c_j \overline{g}(z_j)}{x-z_j}, \;\;\; x \in {\mathbb R}. \ee
Therefore, by unique continuation of the meromorphic function to the lower half plane (see \cite{Koosis}) $\og$ is given by
\be g\in H_2^+, \quad \overline{g}(\mu) = \frac{2\pi i \overline{\phi}(\mu)}{D_+(\mu)} \sum_{j=1}^N\frac{c_j \overline{g}(z_j)}{\mu-z_j}, \;\;\; \mu \in {\mathbb C}_-,\label{eq:grep} \ee
from which it is immediately clear that the space of all such $g$ is at most $N$-dimensional. Note that the expression on the right
hand side of the equality sign in (\ref{eq:grep}) is not clearly an element of $H_2^-$; to deal with this we
substitute the particular $\psi$ under consideration into the formula for $D_+ $ and use residue calculations to obtain the following expression for its analytic continuation to $\C$:
\be D_+ (\mu) = 1 - 2\pi i \sum_{j=1}^N \frac{\overline{\phi}(z_j)}{z_j-\mu}, \quad \mu\in {\mathbb C}. \label{eq:Dpoles} \ee

If $D_+ (\mu)$ has no zeros in $\overline{\mathbb C}_-$ and if $\overline{\phi}(z_j)\neq 0$ for all $j$ then we get
\[ \overline{g}(\mu) = 2\pi i \overline{\phi}(\mu)\sum_{j=1}^N \frac{c_j \overline{g}(z_j)}{D_+ (\mu)(\mu-z_j)},\quad \mu\in\C_- \]
and the condition that $\lim_{\mu\rightarrow z_j}\overline{g}(\mu) = \overline{g}(z_j)$ gives no additional restrictions, as can be confirmed by a simple explicit calculation.
In this case, therefore, the defect of $\overline{\Sc}$ is  $N$.

Now suppose $D_+ $ has zeros in $\overline{\C_{-}}$; for simplicity we are assuming that they all lie
strictly below the real axis. We let $\mu_1,\ldots, \mu_\nu$ be the distinct poles of $\overline{\phi}/D_+ $, with orders
$p_1,\ldots, p_\nu$ and set $P=\sum_{j=1}^\nu p_j$. In order to ensure that $g$ given by (\ref{eq:grep}) lies in $H_2^+$ we need that the conditions
\be \sum_{j=1}^N \frac{c_j}{(\mu_k-z_j)^{n}}\overline{g}(z_j) = 0, \;\;\; n = 1,\ldots, p_k, \;\;\; k = 1,\ldots, \nu , \ee
all hold - a total of $P$ linear conditions on the numbers $\overline{g}(z_1),\ldots,\overline{g}(z_N)$. We now check
that this is a full-rank system. Suppose for a contradiction that there is a non-trivial set of constants $\alpha_{i,k}$ such 
that 
\[ \sum_{k=1}^\nu \sum_{n=1}^{p_k} \frac{\alpha_{i,k}}{(\mu_k-z_j)^n} = 0, \;\;\; j = 1,\ldots,N. \]
Define a rational function by $F(z) = \sum_{k=1}^\nu \sum_{n=1}^{p_k} \frac{\alpha_{i,k}}{(\mu_k-z)^n}$ so that
$F$ has zeros at $z_1,\ldots,z_N$. Observe that $Q(z) := F(z)\prod_{k=1}^\nu  (\mu_k-z)^{p_k}$ is a polynomial
of degree strictly less than $P = \sum_{k=1}^\nu  p_k$, having $N$ zeros. Now $D_+ (\mu)\rightarrow 1$ as $\Im(\mu)\rightarrow\infty$,
so $D_+ $ has the same number of zeros as poles. In particular, $D_+ $ has at least as many poles in $\mathbb C$ as it has zeros
in ${\mathbb C}_-$, giving $N\geq P$. Thus $Q$ is a polynomial of degree $<P\leq N$ having $N$ zeros. 
This means $Q\equiv 0$, so $F\equiv 0$, and the constants $\alpha_{i,k}$ must all be zero. This contradiction
shows that the set of linear constraints on the $N$ values $\overline{g}(z_j)$ has full rank $P$, and so the set of
allowable values for $(\overline{g}(z_1),\ldots,\overline{g}(z_N))$ has dimension  $N-P$. 

The degenerated case leading to non-zero $M$ and $M_0$ can be analysed similarly by considering the local behaviour of $\ophi/D_+$ around zeroes of $D_+(x)$ on the real axis.
\end{proof}

We conclude this part with an example. The details justifying the statements can be found below. 
\begin{Example}\label{specialcase}
Let
$$\psi(x)=\dfrac \alpha {x-z_1} \hbox{ with } z_1\in \C_-, \alpha \in\C\setminus\{0\} \quad \hbox{ and }\quad\overline \phi(x) =\dfrac 1{x-w_1} \hbox{ with } w_1\in \C_+.
$$
The root of $D(\lambda)$ in $\C_+$ or its analytic continuation $D_+(\lambda)$ in $\C_-$ is $\lambda_0= z_1+\frac{2\pi i\alpha }{w_1-z_1}$. We have three cases for $N,P,M,M_0$ as in Theorem \ref{S++}:
\begin{enumerate}
	\item If $\lambda_0\in\C_+$
	 then $N=1$, $P=M=M_0=0$,
	\item if $\lambda_0\in\C_-$ then $N=P=1$, $M=M_0=0$,
	\item if $\lambda_0\in\R$ then $N=M=1$, $P=M_0=0$.
\end{enumerate}
Therefore, $\Sc^\perp$ is non-trivial if and only if $\lambda_0=z_1+\frac{2\pi i\alpha }{w_1-z_1}\in\C_+$. In this case, $\Sc^\perp$ is one dimensional. Moreover,
$$ 
\Sc^\perp=\left\{\frac{const}{(t-\overline{w_1})(t-\overline{z_1}+\frac{2\pi i\overline{\alpha} }{\overline{w_1}-\overline{z_1}})}\right\}$$
$$\hbox{ and }\quad \Sco=\{f\in L^2(\R):\ (P_+f)(w_1)=(P_+f)(\lambda_0)\}.
$$
Similarly, $\Sct^\perp$ is non-trivial if and only if $\widetilde{\lambda_0}:= w_1+\frac{2\pi i \alpha }{w_1-z_1}\in\C_- $ (and therefore $D(\widetilde{\lambda_0})=0$). Note that if $\lambda_0\in\C_+$, then also $\widetilde{\lambda_0}\in \C_+$, whilst if $\widetilde{\lambda_0}\in\C_-$, then also $\lambda_0\in\C_-$. Therefore, at least one of $\Sco$ and $\Scto$ is the whole space. 

Moreover, we see that the bordered resolvent does not detect the singularities at the eigenvalues $\lambda_0\in\C_+$ or $\widetilde{\lambda_0}\in\C_-$: 
For $\lambda\approx \lambda_0\in\C_+$ we have from \eqref{eq:mm1c} and \eqref{eq:mm10} that
\be (A_B-\lambda)^{-1}=\hbox{regular part at }\lambda_0+\frac{\mathcal{P}_{\lambda_0}}{\lambda-\lambda_0},\ee
with the Riesz projection $\mathcal{P}_{\lambda_0}$ given by
$
\mathcal{P}_{\lambda_0} = \llangle\cdot, u_1 \rrangle u_2,
$
where 
\be u_1=\frac{\phi}{x-\overline{\lambda_0}} \quad \hbox{ and } \quad u_2=\alpha (z_1-\lambda_0)\frac{\psi(x)-2\pi i M_B(\lambda_0)\psi(\lambda_0)}{x-\lambda_0}.
\ee
Since $u_1\in\Sc^\perp$, the singularity is cancelled by $P_\Sco$. $u_2$ is the eigenvector of $A_B-\lambda_0$ (see \cite{Kato}).

For $\lambda\approx \widetilde{\lambda_0}\in\C_-$ we have again from\eqref{eq:mm1c} and \eqref{eq:mm10} that
\be (A_B-\lambda)^{-1}=\hbox{regular part at }\lambda_0+\frac{\mathcal{P}_{\widetilde{\lambda_0}}}{\lambda-\widetilde{{\lambda_0}}},\ee
with 
\be
\mathcal{P}_{\widetilde{\lambda_0}} = \llangle\cdot, (\overline{\widetilde{\lambda_0}}-\overline{w_1})\frac{\phi(x)-2\pi i \overline{M_B(\widetilde{\lambda_0})}\ophi(\widetilde{\lambda_0})}{x-\widetilde{\lambda_0}} \rrangle \frac{\alpha\psi}{x-\overline{\widetilde{\lambda_0}}} ,
\ee
where $\frac{\psi}{x-\widetilde{\lambda_0}}$ is an eigenvector of $A_B$  for all (!) $B$ and lies in $\Sct^\perp$, so $P_\Scto$ cancels the singularity of the resolvent.

We note that this behaviour of the bordered resolvent is in accordance with Theorem 3.6 in \cite{BHMNW09}.
\end{Example}

\begin{proof} (Statements in Example \ref{specialcase}.)
In this example, for $\lambda\in \C^+$ we have by the residue theorem 
\be
D_+(\lambda) = 1+\alpha \int\left(\frac{1}{x-z_1}\cdot\frac{1}{x-w_1}\right) \frac{1}{x-\lambda} \ dx\ =\ 1+\frac{2\pi i \alpha}{(z_1-w_1)(\lambda-z_1)} = \frac{\lambda_0-\lambda}{z_1-\lambda}.
\ee
Clearly, this formula also gives the meromorphic continuation of $D_+$ to the lower half plane. We remark that this differs from $D_-$ which is given by
$
D_-(\lambda)=1+(2\pi i \alpha)(z_1-w_1)^{-1}(w_1-\lambda)^{-1}.
$
We now calculate the numbers $N,P,M,M_0$ from Theorem \ref{S++}. $\psi$ has a simple pole at $z_1\in\C_-$, hence $N=1$. As $\phi$ has no zeroes, $M_0=0$. The function $D_+$ has one pole at $z_1\in\C_-$, $\ophi$ has a simple pole at $w_1\in\C_+$. Thus all poles of $\ophi/D_+$ in $\overline{\C_-}$ stem from zeroes of $D_+$. The only zero of this function is at $\lambda_0=z_1+2\pi i\alpha(w_1-z_1)^{-1}.$ Thus, if 
 $\lambda_0\in\C_+$
	 then $P=M=0$; 
	if $\lambda_0\in\C_-$ then $P=1$, $M=0$;
	 if $\lambda_0\in\R$ then  $P=0$, $M=1$.
	
We next show the form of $\Sc^\perp$ and $\Sco$ in the case $\lambda_0\in\C_+$. Using $\ophi \in H_2^+$, from \eqref{eq:65}, we have $\og\in H_2^-$ and 
$
\og = - 2\pi iD_-^{-1}\ophi P_-(\psi\og). 
$
Hence, 
\be
\left(1+\frac{2\pi i \alpha}{(z_1-w_1)(\lambda-w_1)}\right) \og = -\frac{2\pi i \alpha}{\lambda-w_1} P_-\left(\frac{\og} {\lambda-z_1}\right) = -\frac{2\pi i \alpha}{\lambda-w_1} \left(\frac{\og-\og(z_1)} {\lambda-z_1}\right).
\ee
Noting that $\og(z_1)$ is a free parameter, a short calculation shows that
$$\og=\frac{-2\pi i \og(z_1)}{(\lambda-w_1)(\lambda-\lambda_0)}\quad \hbox{ or }\quad g(x)=\frac{const}{(x-\overline{w_1})(x-\overline{\lambda_0})}.$$
Now, $f\in\Sco$ iff
\be
0=\int f\og = const \int f(t)\left(\frac{1}{t-w_1}-\frac{1}{t-\lambda_0}\right).
\ee
This is equivalent to $(P_+f)(w_1)=(P_+f)(\lambda_0)$.
\end{proof}

\begin{remark}\label{rem:Borg}
We note that in the case when $\phi,\psi\in H_2^+$ taking $\lambda,\mu\in \C_+$, the $M$-function and the ranges of the solution operators $S_{\lambda,B}$ and $\widetilde{S}_{\mu,B^*}$ do not depend on $\phi$ and $\psi$ (see \eqref{eq:9} and \eqref{eq:7}). In fact, $M_B(\lambda) = \left[ \mbox{sign}(\Im\lambda) \pi i - B\right]^{-1}$, $S_{\lambda,B}f= (\Gamma_2f) (x-\lambda)^{-1}$ and $\widetilde{S}_{\mu,B^*}f= (\Gammat_2 f) (x-\mu)^{-1}$.  In this highly degenerated case, 
only the boundary condition $B$ can be obtained. Therefore, in this case a Borg-type theorem allowing recovery of the bordered resolvent  from the $M$-function is not possible, even with knowledge of the ranges of the solution operators in the whole of the suitable half-planes.
On the other hand, knowledge of the ranges of the solution operators in both half-planes, together with the $M$-function at one point allows reconstruction by \cite{BMNW17}.
\end{remark}

\subsection{Analysis for the case $\overline{\phi},\psi\in H_2^+$.}

\begin{theorem}\label{notminuspi} Let $\overline{\phi},\psi\in H_2^+$.
If $B\neq -\pi i$ then $\ind({\mathcal S_B}) = 0$.
Similar results hold for $\Sct_B$ by taking adjoints.
\end{theorem}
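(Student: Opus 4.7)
The plan is to prove $\mathrm{def}(\Sc_B)=0$ by showing $L^2(\R)\subseteq\overline{\Sc_B}$. I will use the identification $\overline{\Sc_B}=\overline{\Tc_B}$ (from \cite[Proposition 2.9]{BMNW17} and the Remark following the definition of $\Sco$) together with the explicit description of $\mathrm{Ran}(S_{\mu,B})$ coming from \eqref{eq:7}, namely as the span of $g_\mu(x):=\frac{1}{x-\mu}-\frac{\widehat{\overline{\phi}}(\mu)}{D(\mu)}\frac{\psi(x)}{x-\mu}$. The strategy is to generate $H_2^+$ from the solution operators at $\mu\in\C_-$, and then $H_2^-$ from those at $\mu\in\C_+$, exploiting the rigid structure forced by $\overline{\phi},\psi\in H_2^+$.

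For $\mu\in\C_-$, the Cauchy transform $\widehat{\overline{\phi}}(\mu)$ vanishes (since $\overline{\phi}\in H_2^+$) and, because $\psi\overline{\phi}\in H_1^+$ by H\"older's inequality on Hardy spaces, the integral defining $D(\mu)$ in \eqref{eq:Ddef} also vanishes, giving $D(\mu)\equiv 1$ on $\C_-$. Substituting into \eqref{eq:9} yields $M_B(\mu)^{-1}=-(\pi i+B)$, which is nonzero precisely because $B\neq -\pi i$; combined with \eqref{eq:mm1c} this forces $\C_-\subseteq\rho(A_B)$ and simplifies $g_\mu$ to $(x-\mu)^{-1}$. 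Since the closed linear span of $\{(x-\mu)^{-1}:\mu\in\C_-\}$ equals $H_2^+$ (by a standard Cauchy-integral argument), we conclude $H_2^+\subseteq\overline{\Sc_B}$. This step is where the hypothesis $B\neq -\pi i$ is genuinely used and constitutes the main obstacle: when $B=-\pi i$ one has $\C_-\subseteq\sigma(A_B)$, so these spanning functions become entirely inaccessible, and one anticipates $\overline{\Sc_B}$ to be a proper subspace in that degenerate case.

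For $\mu\in\C_+\cap\rho(A_B)$---whose complement in $\C_+$ is at most discrete since $M_B(\mu)^{-1}$ is analytic and not identically zero there---we have $\widehat{\overline{\phi}}(\mu)=2\pi i\overline{\phi}(\mu)$ and $D(\mu)=1+2\pi i\psi(\mu)\overline{\phi}(\mu)$. The decomposition $\frac{\psi(x)}{x-\mu}=\frac{\psi(x)-\psi(\mu)}{x-\mu}+\frac{\psi(\mu)}{x-\mu}$, whose first summand is an $H_2^+$ function and whose second is a scalar multiple of $(x-\mu)^{-1}\in H_2^-$, rewrites $g_\mu$ after a short algebraic simplification as $\frac{1}{D(\mu)(x-\mu)}$ plus an $H_2^+$ term. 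Since the $H_2^+$ term is already in $\overline{\Sc_B}$ by the previous step and $D(\mu)\neq 0$, subtracting it shows $(x-\mu)^{-1}\in\overline{\Sc_B}$ for every $\mu\in\rho(A_B)\cap\C_+$; the closed span of these functions is $H_2^-$, and so $\overline{\Sc_B}\supseteq H_2^++H_2^-=L^2(\R)$, whence $\mathrm{def}(\Sc_B)=0$. The corresponding statement for $\Sct_B$ follows by exchanging the roles of $\phi$ and $\psi$ (and of $B$ and $B^*$).
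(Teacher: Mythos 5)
Your argument is correct, but it is not the route the paper takes. The paper argues dually and very briefly: it feeds the hypothesis into the characterisation \eqref{eq:65} of $\overline{\Sc}^\perp$ from Proposition \ref{sperpcrit}, where $P_-\ophi=0$ forces $\og\in H_2^+$, hence $P_+(\psi\og)=\psi\og$, and the boundary identity $D_+=1+2\pi i\psi\ophi$ collapses the remaining condition to $(1+2\pi i\ophi\psi)\og=2\pi i\ophi\psi\og$, i.e.\ $\og=0$; the hypothesis $B\neq-\pi i$ never appears in that computation and is consumed silently by the applicability of the $B$-independent characterisation (cf.\ Remark \ref{minuspi}). You instead work primally with $\overline{\Tc_B}$ and the explicit kernel elements \eqref{eq:7}: the vanishing of $\widehat{\overline{\phi}}$ and of $D-1$ on $\C_-$ reduces $\Ran(S_{\mu,B})$ to the span of $(x-\mu)^{-1}$ and shows $\C_-\subseteq\rho(A_B)$ precisely when $B\neq-\pi i$, giving $H_2^+\subseteq\overline{\Sc_B}$, and the difference-quotient splitting together with $1-2\pi i\overline{\phi}(\mu)\psi(\mu)/D(\mu)=1/D(\mu)$ then extracts $(x-\mu)^{-1}$ for $\mu\in\C_+$ modulo $H_2^+$, giving $H_2^-$ as well. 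Both proofs lean on the same external input, namely $\overline{\Sc_B}=\overline{\Tc_B}$ via \cite[Proposition 2.9]{BMNW17}, so neither is more self-contained in that respect; what yours buys is that the role of $B\neq-\pi i$ becomes completely transparent (it is exactly the condition making the reproducing kernels of $H_2^+$ accessible, and its failure explains the degeneracy recorded in Remark \ref{minuspi}), at the price of being longer. One small point you should make explicit: your claim that $M_B(\cdot)^{-1}$ is not identically zero on $\C_+$ needs the standing non-triviality assumption $\psi\ophi\not\equiv 0$ when $B=\pi i$, since $M_B(\mu)^{-1}\to\pi i-B$ as $\Im\mu\to\infty$; for a trivial model with $B=\pi i$ all of $\C_+$ consists of eigenvalues and your second step breaks down, whereas the paper's two-line computation is indifferent to this.
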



\begin{remark}\label{minuspi}
We note that the space $\Sco_B$ as defined in \eqref{eq:mm1} can  depend on $B$. In the case
$B=-\pi i$ we have
\[ \Sco_B = \bigvee_{\mu\in {\mathbb C}^+} \left(\frac{D(\mu) - 2\pi i \overline{\phi}(\mu)\psi(x)}{x-\mu} \right). \] 
If $\phi$ or $\psi$ additionally lies in $L^\infty$, then this gives $\mathrm{def}(\Sc_B)=+\infty$.
However, we consider this choice of $B$ as a degenerate case, since the hypotheses of \cite[Proposition 2.9]{BMNW17} are not satisfied.
\end{remark}

\begin{proof}
We use the characterisation of $\Sc^\perp$ given in \eqref{eq:65}:
\bea
g\in\overline{\Sc}^\perp & \Longleftrightarrow &  P_+ \og-\frac{2\pi i}{D_+}\ophi P_+(\psi\og)=0 \hbox{ and } 
 P_- \og=0  
\ \Longleftrightarrow \ \og\in H_2^+ \hbox{ and } \og=\frac{2\pi i}{D_+}\ophi \psi\og.
\eea
Since  $D_+=1+2\pi i \psi\ophi$ on $\R$ we have
$ \og\in H_2^+ \hbox{ and } (1+2\pi i \ophi\psi) \og=2\pi i\ophi \psi\og,$
so $\og=0$.
\end{proof}

\subsection{The general case $\psi, \phi\in L^2$}

We conclude this section by studying the general case. Without assumptions on the support, or the Hardy class of $\phi$ and $\psi$, the results are rather complicated. Therefore, in what follows we will not worry about imposing slightly stronger regularity conditions on $\phi$ and $\psi$. 
Thus we assume
\be\label{starcondition}
 \phi\in L^2 \hbox{ and } \psi\in L^2\cap L^\infty  \hbox{ or } \phi,\psi\in L^2\cap L^4.
\ee
In some cases (which will be mentioned in the text), we will require the slightly stronger condition 
\be\label{epsiloncondition}
 \phi\in L^2 \cap L^{2+\eps} \hbox{ for some } \eps>0 \hbox{ and } \psi\in L^2\cap L^\infty \hbox{  or } \phi,\psi\in L^2\cap L^4.
\ee

We first define the following set
 \ben \label{E0}
  E_0&:=& \{ \alpha\in\C :
 \exists \hbox{ a set of positive measure }E\subseteq\R    \hbox{ s.t. } \\ \nonumber
 && \hspace{20pt} 1 + 2\pi i \alpha (P_+(\overline \phi \psi)-\psi (P_+(\overline \phi)))= 0 \hbox{ on }E\}. 
	\een
Note that $E_0$ consists  of those $\alpha$ such that the factor $\left[ D_+- 2\pi i(P_+\ophi)\psi\right]$ appearing in \eqref{eq:67} - \eqref{eq:69} vanishes on some non-null set $E$ when $\psi$ is replaced by $\alpha\psi$.
{
\begin{remark}
In many cases, such as when $\psi$ is analytic on $\R$, the set $E_0$ will be empty. However, it is possible to construct examples with non-empty $E_0$. We now give such an example. Take $\phi$ and $\psi$ with disjoint supports. Then their product is $0$ and the second term in formula \eqref{E0} disappears. Choose the function $\phi$ additionally such that $P_+(\overline \phi)$, the multiple of $\psi$ in the third term of \eqref{E0}, does not vanish on an interval, say $[0,1]$. This is, for example, the case if $\phi$ has fixed sign and its support is an interval. One can then choose the function $\psi$ on the interval $[0,1]$ such that, for some fixed non-zero value of the parameter $\alpha$,  the whole third term $-2\pi i \alpha \psi (P_+(\overline \phi))$ in \eqref{E0}   is equal to $-1$ on the interval $[0,1]$. Then for that choice of $\alpha$ the set $E$ includes $[0,1]$ and  $E_0$ is not empty.
\end{remark}
}

\begin{proposition} \label{enought}
 The set $E_0$ defined in \eqref{E0} is countable.
\end{proposition}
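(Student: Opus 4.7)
The plan is to reduce the statement to the classical fact that a measurable function can have only countably many level sets of positive Lebesgue measure. Define
$$F(x) := P_+(\overline\phi\,\psi)(x) - \psi(x)\,P_+(\overline\phi)(x).$$
Under the assumptions \eqref{starcondition}, the product $\overline\phi\psi$ lies in $L^2$ (either because $\psi\in L^\infty$ and $\phi\in L^2$, or because $\phi,\psi\in L^4$), so $P_+(\overline\phi\psi)\in L^2$; similarly $P_+(\overline\phi)\in L^2\cap L^p$ for some $p>2$ (by the M.~Riesz theorem on $L^p$-boundedness of the Riesz projections), and multiplying by $\psi\in L^\infty$ or $\psi\in L^4$ keeps the result in $L^2$. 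Thus $F$ is a well-defined measurable function on $\R$.

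Now observe that $0\notin E_0$, since the defining condition at $\alpha=0$ becomes $1\equiv 0$ on a set of positive measure, which is absurd. For $\alpha\in E_0\setminus\{0\}$, the condition
$$1+2\pi i\alpha\bigl(P_+(\overline\phi\psi)-\psi P_+(\overline\phi)\bigr)=0 \text{ on } E$$
is equivalent to $F(x)=c_\alpha$ on $E$, where $c_\alpha:=-\dfrac{1}{2\pi i\alpha}$. The map $\alpha\mapsto c_\alpha$ is injective on $\C\setminus\{0\}$, so it suffices to show that the set
$$V:=\bigl\{\,c\in\C : |F^{-1}(\{c\})|>0\,\bigr\}$$
is at most countable.

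To prove $V$ is countable, decompose
$$V=\bigcup_{n,m\in\N} V_{n,m},\qquad V_{n,m}:=\bigl\{c\in V : |F^{-1}(\{c\})\cap[-m,m]|>1/n\bigr\}.$$
For fixed $n,m$ the sets $\{F^{-1}(\{c\})\cap[-m,m]:c\in V_{n,m}\}$ are pairwise disjoint (as preimages of distinct points under a function) and each has Lebesgue measure greater than $1/n$; since $[-m,m]$ has total measure $2m<\infty$, the family $V_{n,m}$ contains at most $2mn$ elements, hence is finite. A countable union of finite sets is countable, so $V$, and therefore $E_0$, is countable.

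The only subtlety is the well-definedness and measurability of $F$, which is why I invoked the $L^p$-boundedness of $P_\pm$ under the standing hypothesis \eqref{starcondition}; once $F$ is a bona fide measurable function, the level-set pigeonhole argument is entirely elementary and does not require any structure of $F$ beyond measurability.
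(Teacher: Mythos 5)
Your proof is correct and follows essentially the same route as the paper: both reduce membership of $\alpha$ in $E_0$ to the function $F=P_+(\overline\phi\psi)-\psi P_+(\overline\phi)$ taking the constant value $-1/(2\pi i\alpha)$ on a set of positive measure, and then invoke the fact that a measurable function can have only countably many level sets of positive measure. The only difference is that the paper cites this last fact from an external lemma, whereas you prove it inline with the pigeonhole decomposition; that is a welcome expansion rather than a different argument.
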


\begin{proof} Let $\alpha\in E_0\setminus\{0\}$ and $E$ be the set of positive measure on which
$1 + 2\pi i \alpha (P_+(\overline \phi \psi)-\psi (P_+(\overline \phi)))=0$. Set $f=2\pi i (P_+(\overline \phi \psi)-\psi (P_+(\overline \phi)))$.  
As $1+\alpha f|_E=0$ then $f|_E=-1/\alpha$; this can only be true for a countable set of $\alpha$. See, e.g.~\cite[Lemma 7.12]{BMNW17}.
\end{proof}

\begin{theorem}\label{thm:E0}
Assume \eqref{epsiloncondition}. Let $\alpha\in E_0$, then $\mathrm{def}\  \Sc_\alpha =+\infty$.
\end{theorem}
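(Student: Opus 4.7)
The plan is to use the characterisation from Remark~\ref{rem:Salpha}, which says $g \in \Sco_\alpha^\perp$ iff $\bar g \in \ker(V_\alpha - K_\alpha)$, where $V_\alpha$ is the multiplication operator by $1 + \alpha f$ with $f = 2\pi i(P_+(\bar\phi\psi) - \psi P_+(\bar\phi))$, and $K_\alpha := 2\pi i\alpha \bar\phi[P_+,\psi]$ is a Hankel-commutator contribution. By the defining property of $E_0$, the symbol $V_\alpha$ vanishes identically on a set $E \subset \R$ of positive Lebesgue measure. The objective is then to exhibit an infinite-dimensional subspace of $\ker(V_\alpha - K_\alpha)$.

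The natural starting point is the observation that the pointwise multiplication operator $V_\alpha$ already contains the infinite-dimensional subspace $L^2(E) := \{h \in L^2(\R) : \mathrm{supp}\,h \subseteq E\}$ in its kernel. I would therefore use the ansatz $\bar g = h + u$ with $h \in L^2(E)$ (so that $V_\alpha h \equiv 0$), reducing the eigen-equation to the inhomogeneous problem
\[
(V_\alpha - K_\alpha)\,u \;=\; K_\alpha\, h, \qquad u \in L^2(\R).
\]
To attack this, I would decompose $L^2(\R) = L^2(E) \oplus L^2(E^c)$ and represent $V_\alpha - K_\alpha$ in block form; the $(E^c, E^c)$ block is a bounded perturbation of the multiplication operator $V_\alpha|_{E^c}$, which is non-zero a.e.\ on $E^c$ (taking $E$ maximal) and, under the regularity hypothesis \eqref{epsiloncondition}, is Fredholm on a suitable exhaustion of $E^c$. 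A Schur-complement reduction then transforms the solvability of the inhomogeneous equation into one of the form $\widetilde K h = 0$ on $L^2(E)$, where $\widetilde K$ is a bounded operator built from the off-diagonal blocks of $K_\alpha$ and the (generalised) inverse of the $(E^c, E^c)$ block. Linear independence of the resulting family $\{h + u_h : h \in \ker\widetilde K\}$ would follow automatically from their distinct restrictions to $E$.

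The main obstacle is to prove that $\ker \widetilde K \subseteq L^2(E)$ is infinite-dimensional. I would approach this by exploiting the factorisation of $K_\alpha$ through multiplication by $\bar\phi$ composed with the commutator $[P_+,\psi]$: under \eqref{epsiloncondition} the induced $\widetilde K$ inherits a compact (or at worst finite-rank) structure on $L^2(E)$, and the alternative that $\ker\widetilde K$ is finite-dimensional would be incompatible, via analytic perturbation in $\alpha$, with the countability of the exceptional set $E_0$ established in Proposition~\ref{enought} --- a finite-dimensional kernel should persist under small perturbations, whereas the defect of $\Sc_\alpha$ must drop drastically as $\alpha$ leaves $E_0$. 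The rigorous passage from the formal Schur complement to the conclusion requires a delicate limiting argument across an exhaustion $E^c_n \nearrow E^c$ on which $|V_\alpha|$ is uniformly bounded below, combined with weak compactness of the correctors $u_h$; this is where the bulk of the technical work lies.
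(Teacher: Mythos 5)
Your reduction of the problem to the eigenvalue equation for a multiplication operator plus a Hankel-type term, and the observation that the multiplier $1+\alpha f=D_+-2\pi i\alpha\psi P_+\ophi$ vanishes on a set $E$ of positive measure, is exactly the paper's starting point (via \eqref{eq:67}). The gap is in everything after that. Your Schur-complement scheme needs two things that you do not supply and that in fact fail or remain unproved: (a) invertibility (or at least a controlled generalised inverse) of the $(E^c,E^c)$ block, which is a perturbation of multiplication by $V_\alpha|_{E^c}$ --- this function is nonzero a.e.\ but in general \emph{not} bounded away from zero, so the block is not Fredholm and the exhaustion/weak-compactness limit you defer is precisely the unresolved difficulty; and (b) the claim that $\ker\widetilde K\subseteq L^2(E)$ is infinite-dimensional. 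For (b) your proposed justification --- that a finite-dimensional kernel would persist under perturbation in $\alpha$, contradicting the countability of $E_0$ from Proposition \ref{enought} --- is not a valid argument: kernels of non-normal operators of the form ``multiplication plus compact'' do not persist under perturbation in any useful sense, and Theorem \ref{thm:indexx} shows that $\ind\Sc_\alpha$ can take arbitrary finite nonzero values for $\alpha\notin E_0$, so there is no contradiction to extract. As written, the proposal establishes nothing beyond $\dim\ker V_\alpha=\infty$, which does not transfer to $\ker(V_\alpha-K_\alpha)$.

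The paper's proof sidesteps all of this with an explicit closed-form solution that your ansatz $\og=h+u$ does not capture. For $h\in L^2(E)\cap L^\infty(E)$ (or $L^2(E)\cap L^4(E)$, matching \eqref{epsiloncondition}), one sets
\begin{equation*}
\og=(P_+\ophi)\,\chi_E h-\ophi\,P_-(\chi_E h),
\end{equation*}
and verifies \eqref{eq:67} pointwise by a direct computation: one finds $P_-\og=-(P_-\ophi)P_-(\chi_E h)$, uses the Sokhotski--Plemelj jump relation $D_+-D_-=2\pi i\psi\ophi$ on $\R$, and finally uses that $(D_+-2\pi i\psi P_+\ophi)\chi_E h=0$ because the multiplier vanishes on $E=\supp h$. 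No inversion of any block is required, and the hypothesis \eqref{epsiloncondition} is used only to ensure $g\in L^2$ and to justify $P_-P_+=0$ on the relevant products. Infinite-dimensionality then comes for free: shrinking $E$ to a suitable $E'\subset E$ with $\ophi\not\equiv 0$ on $(E')^c$, one has $\og|_{(E')^c}=-\ophi\,P_-(\chi_{E'}h)$, which is nonzero a.e.\ whenever $P_-(\chi_{E'}h)\not\equiv 0$ (boundary values of a nontrivial Hardy-class function vanish only on null sets), so distinct $h$ give linearly independent $g$. To repair your argument you would need to discover this explicit corrector $u=(P_+\ophi-1)\chi_E h-\ophi P_-(\chi_E h)$, at which point the Schur-complement machinery becomes unnecessary.
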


\begin{remark}
When considering the corresponding $\Sct_\alpha$ note that the set
$$
  \widetilde{E}_0:= \left\{ \alpha: 1 + 2\pi i \overline{\alpha} (P_+(\overline \psi \phi)-\phi (P_+(\overline \psi)))= 0\hbox{ on a set of positive measure}\right\}$$ 
	does not need to coincide with $E_0$, so it is possible to have $\ind \Sc_\alpha \neq \ind\Sct_\alpha$ even for $\alpha\in E_0$. 
	For examples of this, see \cite{BMNW17}.
\end{remark}

 \begin{proof} Without loss of generality, we assume $\alpha=1$. 
Let $E$ be the set of positive measure from \eqref{E0}. 
For $\phi\in L^{2+\eps}$, choose $h\in L^2(E) \cap L^\infty(E)$, while if $\phi\in L^4$, then choose $h\in L^2(E)\cap L^4(E)$.
Now, set 
$
\og=(P_+\ophi)\chi_E h - \ophi P_-(\chi_E h). 
$
By our assumptions on $h$ and in \eqref{epsiloncondition}, we have $g\in L^2$.

We next show that $\og$ satisfies the right hand side of \eqref{eq:67} pointwise. Note that here and in several other places in this proof we use that $P_-P_+f=0$. This is justified as our assumptions  on $h$ and in \eqref{epsiloncondition} guarantee that the functions $f$ we apply this to are in appropriate function classes.
 We have
\ben \nonumber
P_-\og &=& P_- ((P_+\ophi)\chi_E h - \ophi P_-(\chi_E h))
\ =\ P_- ((P_+\ophi)P_-(\chi_E h) - \ophi P_-(\chi_E h))\\ \nonumber
&=& P_- ((P_+\ophi - \ophi) P_-(\chi_E h))\ =\
 P_- (-(P_-\ophi) P_-(\chi_E h)) \ =\ -(P_-\ophi) P_-(\chi_E h).
\een
 Multiplying by $2\pi i\psi$ and using that $D_+-D_-=2\pi i\psi\ophi$ on the real axis by the Sohocki-Plemelj Theorem (see \cite{Koosis}), gives
\ben\nonumber
2\pi i\psi P_-\og  &=& -2\pi i\psi (P_-\ophi) P_-(\chi_E h) \ =\
 2\pi i\psi (-\ophi+(P_+\ophi)) P_-(\chi_E h) \\ &=& (-(D_+-D_-)+2\pi i\psi (P_+\ophi)) P_-(\chi_E h). \label{eq:121}
\een
We rewrite the $D_-$-term as follows.
\bea
D_- P_-(\chi_E h) &=& P_- (D_- P_-(\chi_E h))\ =\ P_- ((D_--D_+) P_-(\chi_E h))+ P_- (D_+ P_-(\chi_E h))\\ \nonumber
&=& P_- ((D_--D_+) P_-(\chi_E h))+ P_- (D_+ \chi_E h).
\eea
Inserting this in \eqref{eq:121}, and rearranging gives the identity
$$ 2\pi i\psi P_-\og - P_- ((D_--D_+) P_-(\chi_E h))- P_- (D_+ \chi_E h) = (-D_++2\pi i\psi (P_+\ophi)) P_-(\chi_E h).$$
Multiplying by $\ophi$ and using that on $E$ we have $D_+=2\pi i\psi(P_+\ophi)$ this gives
$$ 2\pi i \ophi\left( \psi P_-\og + P_- (\psi\ophi P_-(\chi_E h))- P_- (\psi(P_+\ophi) \chi_E h)\right) =- (D_+- 2\pi i\psi (P_+\ophi))\ophi P_-(\chi_E h),$$
which, noting that $(D_+- 2\pi i\psi (P_+\ophi))\chi_E h=0$, is 
 the equation on the right hand side of \eqref{eq:67}.

We now need to chose $h\in L^2(E)$ suitably to obtain an infinite dimensional subspace for the corresponding $\og$. Choose $E'\subset E$ with $|E'|>0$ and sufficiently small such that $\Omega_\phi\not\subseteq E'$ (as $E$ has positive measure and $\phi$ is not identically zero this is always possible). Consider
$
\og=(P_+\ophi)\chi_{E'} h - \ophi P_-(\chi_{E'} h). 
$
By the above arguments, $g\in\Sc^\perp$. Moreover,
$\og\vert_{(E')^c} = - \chi{((E')^c)} \ophi P_-(\chi_{E'} h).$
As $\chi{((E')^c)} \ophi\not\equiv 0$ and $ P_-(\chi_{E'} h)$ are the boundary values of an analytic function and therefore non-zero a.e.~on $\R$, we have $\og\not\equiv 0$ whenever $ P_-(\chi_{E'} h)\not\equiv 0$ (see \cite{Koosis}), which gives an infinite dimensional set of such functions.
 \end{proof}

 \begin{theorem}\label{thm:indexx} Let $\phi\in L^2\cap L^\infty$ and $\psi\in L^2\cap C_0(\R)$, where $C_0(\R)$ is the space of continuous functions vanishing at infinity, and assume $\alpha\not\in E_0$. 

(i) Then $\ind \Sc_\alpha>0$ if and only if $(2\pi i \alpha)^{-1}\in \sigma_p(\M+\K)$, where
$
\M= \left((P_+\overline \phi)\psi -P_+(\psi \overline \phi )\right)
$ is a possibly unbounded multiplication operator  and
$
\K=\ophi\left[ P_+ \psi P_- - P_- \psi P_+  \right]
$ is the difference of two compact Hankel operators multiplied by $\ophi$. Note that $\Dom(\M+\K)=\Dom(\M)$, where $\Dom(\M)$ is the canonical domain of the multiplication operator.

Moreover, $$\Sc_\alpha^\perp=\ker\left(\M+\K-\frac{1}{2\pi i\alpha}\right),\quad
\hbox{ so }
\quad \ind \Sc_\alpha =\dim \ker\left(\M+\K-\frac{1}{2\pi i\alpha}\right).$$
If $(2\pi i \alpha)^{-1}\notin\overline{\essran_{k\in\R} \M(k)}$, then 
$$\ind \Sc_\alpha =\dim \ker\left(I+\K\left(\M-\frac{1}{2\pi i\alpha}\right)^{-1}\right) < \infty.$$

(ii) Additionally assume $\M(k)$ is continuous. Then $\C\setminus\overline{\Ran\M(k)}$ is a countable union of disjoint connected domains. Set $\mu=(2\pi i \alpha)^{-1}$. Then in each of these domains we have either
\begin{enumerate}
	\item[(I)] $\ind\Sc_\alpha=0$ whenever $\mu$ is in this domain except (possibly) a discrete set, or
	\item[(II)] $\ind\Sc_\alpha\neq 0$ is finite and constant for any $\mu$ in the domain except (possibly) a discrete set.
\end{enumerate}
Moreover, for $\mu$ sufficiently large, we have $\ind\Sc_\alpha=0$.
\end{theorem}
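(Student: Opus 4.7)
The plan is to split the argument into the two halves (i) and (ii) of the statement, using Remark \ref{rem:Salpha} together with the Fredholm and analytic Fredholm theorems.

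For part (i), the starting point is Remark \ref{rem:Salpha}, which already yields the identity $\Sc_\alpha^\perp=\ker(L-1/(2\pi i\alpha))$ for the operator $L=[-P_+(\psi\ophi)+P_+(\ophi)\psi]+\ophi[P_+\psi P_--P_-\psi P_+]$, matching the stated decomposition $L=\M+\K$. What then remains is (a) to verify compactness of $\K$ and (b) to derive the alternative formula in terms of $I+\K(\M-c)^{-1}$. For (a) I would invoke Hartman's theorem in its real-line form: since $\psi\in C_0(\R)\subset H^\infty(\C_+)+C(\dot\R)$, both Hankel-type operators $P_+\psi P_-$ and $P_-\psi P_+$ are compact on $L^2(\R)$, and multiplication by the bounded function $\ophi$ preserves compactness. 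For (b), when $c:=1/(2\pi i\alpha)\notin\overline{\essran\M}$ the multiplication operator $\M-c$ has a bounded everywhere-defined inverse, and the factorisation $\M+\K-c=(I+\K(\M-c)^{-1})(\M-c)$ shows that $(\M-c)$ is a linear bijection from $\ker(\M+\K-c)$ onto $\ker(I+\K(\M-c)^{-1})$; finiteness of the dimension then follows from the classical Fredholm alternative applied to $I$ plus the compact operator $\K(\M-c)^{-1}$.

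For part (ii), continuity of $\M(k)$ makes $\overline{\Ran\M(k)}$ a closed connected subset of $\C$, and its open complement splits, by second-countability of $\C$, into an at most countable family of connected domains. On each such domain $\Omega$, the map $\mu\mapsto(\M-\mu)^{-1}$ is analytic in the operator-norm topology, since it is multiplication by the holomorphic family $(\M(k)-\mu)^{-1}$ whose $L^\infty$-norm equals $1/\mathrm{dist}(\mu,\overline{\essran\M})$; hence $T(\mu):=\K(\M-\mu)^{-1}$ is an analytic family of compact operators on $\Omega$. The dichotomy (I)/(II) is then a direct application of Steinberg's analytic Fredholm theorem: either $I+T(\mu)$ is invertible at every $\mu\in\Omega$ outside a discrete set of exceptional points, or it is nowhere invertible on $\Omega$. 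In the second case I would invoke the Gohberg--Sigal theory of holomorphic Fredholm-valued functions to conclude that $\dim\ker(I+T(\mu))$ is a positive integer which is constant on $\Omega$ away from a discrete set of $\mu$ where it may jump upward.

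The final asymptotic claim reduces to showing that $\overline{\Ran\M}$ is bounded, so that the unbounded connected component $\Omega_\infty$ contains a neighbourhood of infinity and $\mathrm{dist}(\mu,\overline{\Ran\M})\to\infty$ as $|\mu|\to\infty$ in $\Omega_\infty$. Given this, $\|T(\mu)\|\le\|\K\|\cdot\|(\M-\mu)^{-1}\|\to 0$, so $I+T(\mu)$ is invertible for all $\mu\in\Omega_\infty$ of sufficiently large modulus, placing $\Omega_\infty$ into case (I) and forcing $\ind\Sc_\alpha=0$ there. The main technical obstacle I anticipate is making the boundedness of $\Ran\M(k)$ genuinely rigorous: for general $\phi\in L^\infty$ the Riesz projection $P_+(\ophi)$ lies only in BMO, so pointwise control of $(P_+\ophi)(k)\psi(k)$ and $P_+(\psi\ophi)(k)$ as $|k|\to\infty$ must be extracted either by some extra decay on $\phi$ (so that $\psi\ophi\in L^1$ and the Riemann--Lebesgue-type vanishing of its Cauchy transform at infinity applies) or by exploiting the postulated continuity of $\M(k)$ itself together with the $C_0$-decay of $\psi$. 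A secondary subtlety is reconciling Steinberg's theorem with the generic-constancy statement in case (II), for which the local Riesz-projection construction of Gohberg--Sigal supplies the discrete exceptional set of jump points.
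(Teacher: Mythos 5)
Your proposal is correct and follows essentially the same route as the paper's (very brief) outline: reduction via Proposition \ref{sperpcrit}/Remark \ref{rem:Salpha} to the kernel of $\M+\K-(2\pi i\alpha)^{-1}$, compactness of the Hankel operators (the paper cites Peller's Corollary 8.5, which is the same Hartman-type criterion you invoke), the factorisation through $I+\K(\M-\mu)^{-1}$, and the analytic Fredholm theorem on each component of $\C\setminus\overline{\essran\,\M}$. Your additional care about the boundedness of $\Ran\M(k)$ for the large-$\mu$ claim is a detail the paper glosses over, but it does not change the argument's structure.
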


 \begin{proof}[Proof (outline)]
The first part follows easily from \eqref{eq:69} in Proposition \ref{sperpcrit} and standard results on compact operators. The compactness of the difference of Hankel operators follows from \cite[Corollary 8.5]{Peller}.

 For the second part, consider the analytic operator-valued function
 $
 I+({\mathcal M} -\mu I)^{-1} \K
 $
 which is a compact perturbation of $I$. We need to know the values $\mu \in \C$ for which this operator has non-trivial kernel.
 Each connected component of $\C\backslash \overline{\mbox{essran}\ \M} $  either contains only discrete (countable) spectrum or else lies entirely in the spectrum. However for large $\mu$, $\{ 0 \} =   \ker(I+(\M-\mu)^{-1}\K)$, so by the Analytic Fredholm Theorem (see \cite{RS05}),  outside some bounded set   
 there is no spectrum of $\M+\K$.
 \end{proof}

Although this theorem gives a description of $\Sco_\alpha$ for a rather general case of $\psi$ and $\phi$, for concrete examples as investigated in previous subsections it is useful to   determine the space explicitly rather than just give the description in terms of operators $\K$ and $\M$. However, this theorem shows the topological properties of the function $\ind \Sc_\alpha$ in the $\alpha$-plane.

\begin{example}\label{petal}
Let
$$\psi(x)=\alpha \left(\frac{c_1}{x-z_1}+\frac{c_2}{x-z_2}\right) \hbox{ with } z_1\neq z_2\in \C_-, \alpha \in\C\setminus\{0\}$$ and 
$$\overline \phi(x) =\dfrac 1{x-w_1} \hbox{ with } w_1\in \C_+.
$$
We wish to analyse the defect as a function of $\alpha$. By Theorem \ref{S++}, we need to determine the number of roots of the analytic continuation $D_+(\lambda)$ of $D(\lambda)$  in $\C_-$. 
Now,
\be\label{Dplus}
D_+(\lambda)= 1-2\pi i\alpha\left( \frac{c_1}{(z_1-w_1)(z_1-\lambda)} +  \frac{c_2}{(z_2-w_1)(z_2-\lambda)} \right).
\ee
After setting $\hat\mu:=\frac{2\pi i\alpha}{(z_1-w_1)(z_2-w_1)}$ a short calculation shows that the roots of $D_+(\lambda)$ solve
\be
\lambda^2+\lambda(d_1\hat\mu-z_1-z_2)+d_2\hat\mu+z_1z_2=0,
\ee
where 
$$d_1= c_1(z_2-w_1)+c_2(z_1-w_1) \quad\hbox{ and }\quad d_2= -c_1z_2(z_2-w_1)-c_2z_1(z_1-w_1).$$
In particular, for $\hat\mu=0$ the roots are $z_1,z_2\in \C_- $. By continuity, for small $|\alpha|$, by Theorem \ref{S++} we have $ \ind \Sco_\alpha=0 $.

For a polynomial $ \lambda^2+p\lambda+q=0$, an elementary calculation shows that it has a real root iff
\be\label{poly} (\Im q)^2 = (\Im p)\left(\Re p \Im q -\Re q\Im p\right)\quad\hbox{ and }\quad 4\Re q \leq |p|^2 .\ee

We now analyse the defect in a few examples. 
\begin{enumerate}
	\item[(A)] We first make the specific choice
$$\psi(x)=\alpha \left(\frac{-2}{x+i}+\frac{3}{x+2i}\right) \quad \hbox{ and }\quad\overline \phi(x) =\dfrac 1{x-i}.
$$
Then $d_1=0, d_2=-6$ and the equation in \eqref{poly} becomes 
\be\label{eq:onepetal}
 (\Im \hat\mu)^2 = \frac12(1+3\Re\hat\mu).
\ee
All $\hat\mu$ satisfying \eqref{eq:onepetal} satisfy the inequality in \eqref{poly}.
This gives a parabola in the $\alpha$-plane (or equivalently the $\hat\mu$-plane) with $\ind \Sco_\alpha=0$ inside or on the parabola and $\ind \Sco_\alpha=1$ outside. In the $1/\alpha$-plane this gives a curve whose interior is a petal-like shape  with   $\ind \Sco_\alpha=0$ for $1/\alpha$ outside or on the curve and $\ind \Sco_\alpha=1$ for  $1/\alpha$ inside the curve.
\item[(B)] We now return to the formula for $D_+$ in \eqref{Dplus}. Setting $\mu=(2\pi i \alpha)^{-1}$, we have 
\be
\mu =  \frac{c_1}{(z_1-w_1)(z_1-\lambda)} +  \frac{c_2}{(z_2-w_1)(z_2-\lambda)}.
\ee 
Clearly for $\lambda \to \pm\infty$, we have that $\mu=0$. 
We now choose $c_1,c_2$ to get another real root at $\lambda=0$.
Consider 
$$\psi(x)=\alpha \left(\frac{-1}{x+i}+\frac{3}{x+2i}\right) \quad \hbox{ and }\quad\overline \phi(x) =\dfrac 1{x-i}.
$$
In the $\mu$-plane this leads to one petal. As $\lambda$ runs through $\R$, this curve is covered twice (once for $\lambda<0$ and once for $\lambda>0$). We have $\ind \Sco_\alpha=0$ for $\mu$ outside the curve  and $\ind \Sco_\alpha=2$ for $\mu$ inside the curve. On the curve we have $\ind\Sco_\alpha =0$. The double covering of the curve allows the jump of $2$ in the defect when crossing the curve.
\item[(C)] More generally,  if $\psi$ has $N$ terms, then the problem of finding real roots of $D_+(\lambda)$ leads to studying the real zeroes of
$$\xi(\lambda):=\sum_{k=1}^N \frac{a_k}{z_k-\lambda}, \quad \hbox{ where } \quad a_k=c_k\ophi(z_k).$$
Generically $\xi$ will not have real zeroes and we will only get one petal in the $\mu$-plane. However, we can arrange it that $\xi$ has $N-1$ real zeroes which leads to $N$ petals in the $\mu$-plane. Assume $a_N\neq0$. Then to do this, we need to  solve the linear system,
\be
Z\left(\begin{array}{c}a_1 \\ \vdots \\a_{N-1}\end{array}\right) = \left(\begin{array}{c}-\frac{a_N}{z_N-\lambda_1} \\ \vdots \\-\frac{a_N}{z_N-\lambda_{N-1}}\end{array}\right), 
\ee
where the matrix $Z$ has ${jk}$-component given by $z_{jk}=(z_k-\lambda_j)^{-1}$. $Z$ is invertible whenever all $z_k\in\C_-, \lambda_j\in\R$ are distinct.

\begin{figure}[ht]\vspace{-90pt}
    \includegraphics[width=2in,height=4in]{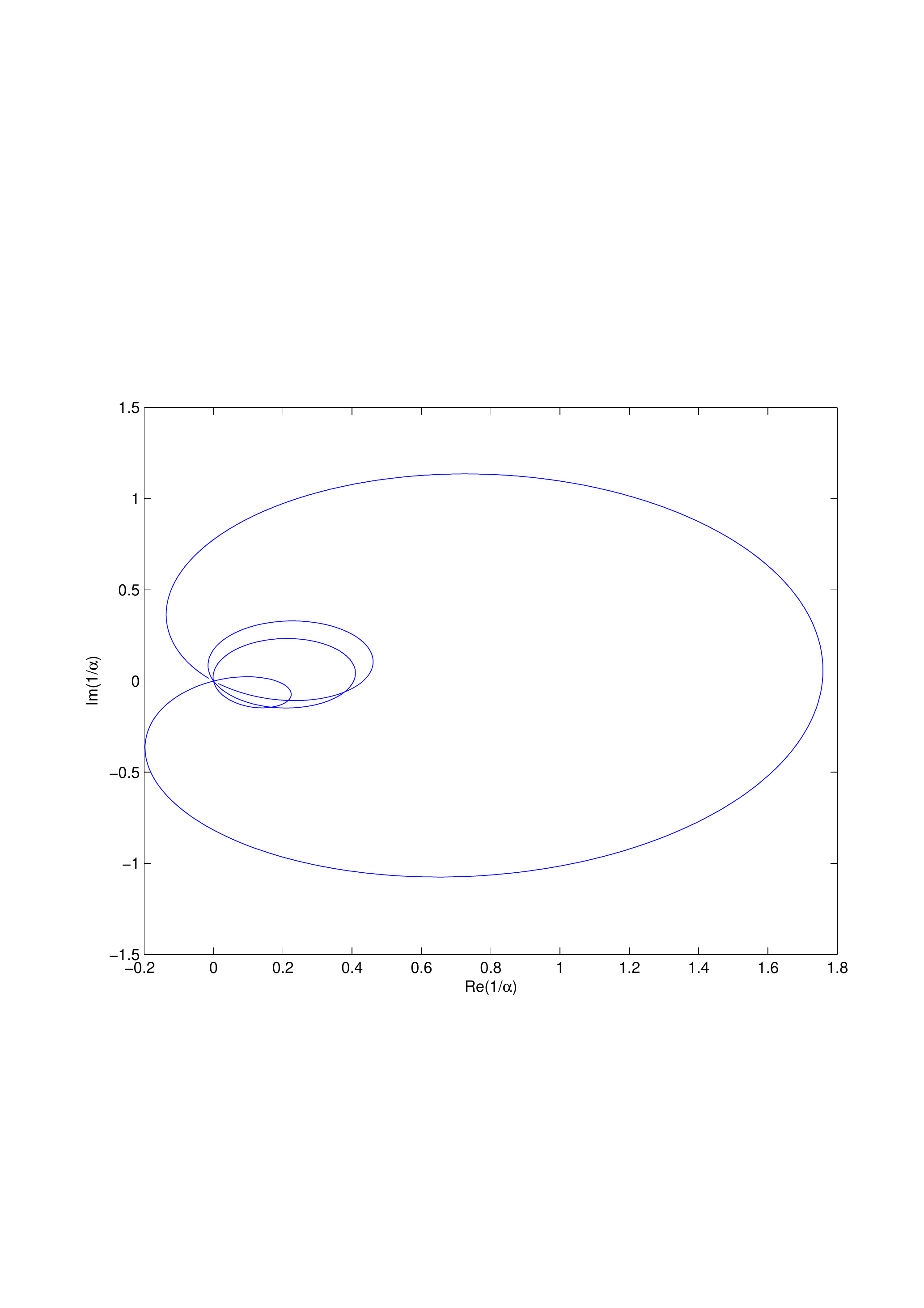} 
    \includegraphics[width=2.4in,height=4in]{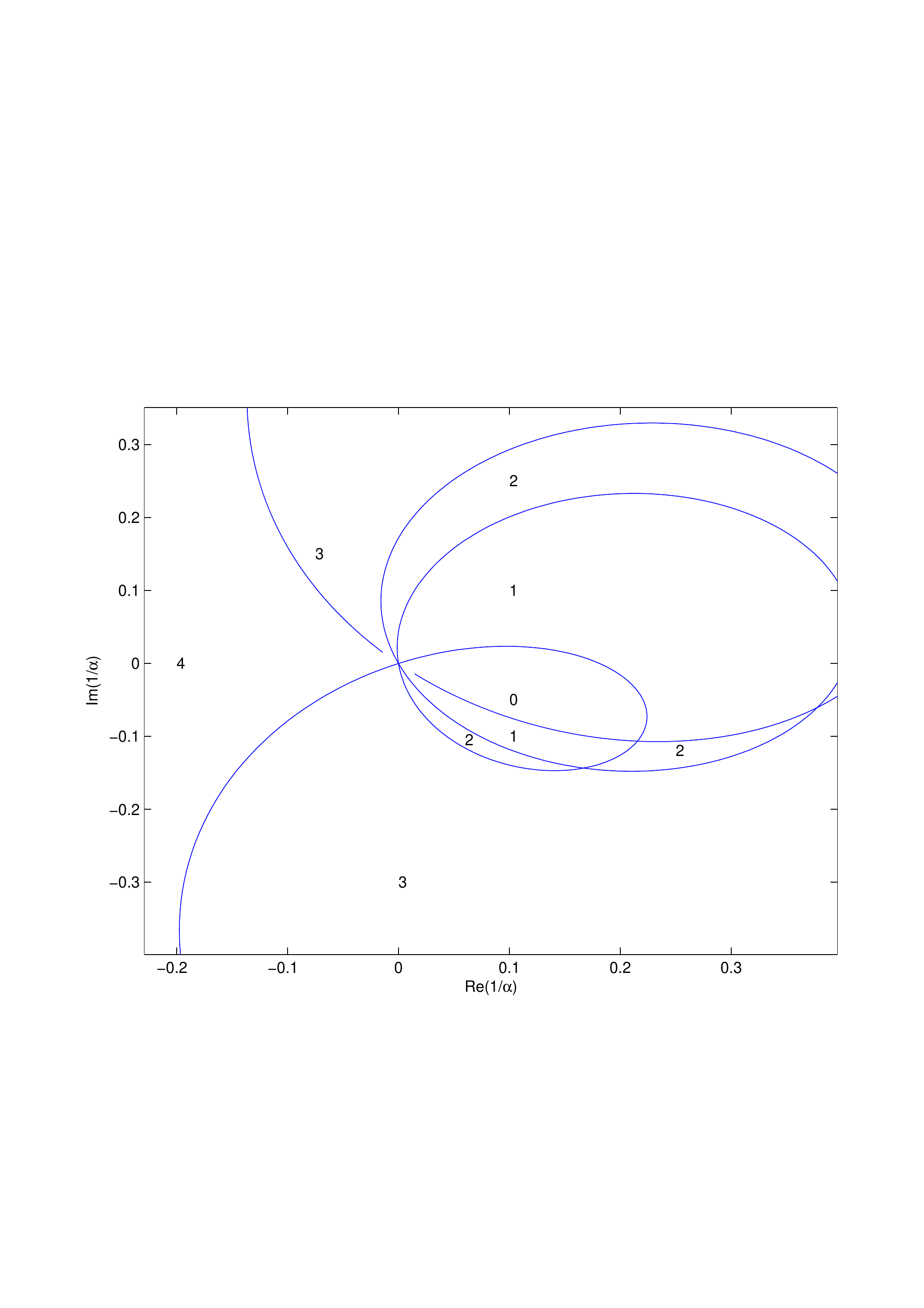} \vspace{-70pt}
  \caption{The curve in the $1/\alpha$-plane along which $D_+$ has a real root for the case $\lambda_1=0$, $\lambda_2=1$, $\lambda_3=-2$, $z_1=-i$, $z_2=1-i$, $z_3=-2-i$, $z_4=3-2i$ and $a_4=1$. On the right, zoom of part of the curve including the number of roots of $D_+$ in $\C_-$ in different components.}
  \label{fig:2}
\end{figure}

For the example in Figure \ref{fig:2}, the defect in each of the components is given by $4-\nu_-$ where $\nu_-$ denotes   the number of roots of $D_+$ in $\C_-$ (by Theorem \ref{S++}). At each curve precisely one of the roots crosses from the lower to the upper half-plane, thus increasing the defect by $1$. On the curve itself, one root is on the real axis and by Theorem \ref{S++}, the defect coincides with the smaller of the defects on the components on each side of the curve. By a similar reasoning at the three non-zero points of self-intersection of the curve the defect coincides with the smallest defect of the neighbouring components.

This example displays the analytical nature of finding the defect in terms of the location of roots of $D_+$ using  Theorem \ref{S++}. 
On the other hand, it also displays the topological nature of the same situation mentioned in  Theorem \ref{thm:indexx}. The complex $1/\alpha$-plane is separated into components in which the defect is constant everywhere (in this example the exceptional discrete set is empty). The curves are the range of $2\pi i \M (t)$ on the real axis.

\end{enumerate}

%
%
\end{example}

\end{document}